\theoremstyle{thmstyleone}%
\theoremstyle{thmstyletwo}%
\newtheorem{remark}{Remark}%
\theoremstyle{thmstylethree}%
\newtheorem{definition}{Definition}%
\let\mib=\boldsymbol
\newtheorem{theorem}{Theorem}[section]
\newtheorem{corollary}[theorem]{Corollary}
\newtheorem{rem}{Remark}
  \def\startnewsection#1#2{\section{#1}\label{#2}\setcounter{equation}{0}}   
\def\sgn{\mathop{\rm sgn\,}\nolimits}    
\def\mx{{\bf x}}
\def\my{{\bf y}}
\def\R{\mathbb{R}}
\def\eps{\varepsilon}
\def\qty{\displaystyle}
\def\meta{{\mib \eta}}
\def\tilde{\widetilde}
\newcommand{\pa}{\partial}
\begin{document}

\title[Shock Filter Cahn-Hilliard Equation: Regularized to Entropy Solutions]{Navigating the Complex Landscape of Shock Filter Cahn-Hilliard Equation: From Regularized to Entropy Solutions}


\author[1,2]{\fnm{Darko} \sur{ Mitrovic}}\email{darko.mitrovic@univie.ac.at}

\author*[1,3,4]{\fnm{Andrej} \sur{Novak}}\email{andrej.novak@phy.hr}
\equalcont{These authors contributed equally to this work.}
%
%
\affil*[1]{\orgdiv{Department of Mathematics}, \orgname{University of Vienna}, \orgaddress{\street{Oskar Morgenstern Platz-1}, \city{Vienna}, \postcode{1090}, \state{Austria}, \country{Austria}}}
\affil[2]{\orgdiv{Department of Mathematics and Natural Sciences}, \orgname{University of Montenegro}, \orgaddress{\street{Cetinjski put bb}, \city{Podgorica}, \postcode{81000}, \state{Montenegro}, \country{Montenegro}}}
\affil*[3]{\orgdiv{Department of Physics, Faculty of Science}, \orgname{University of Zagreb}, \orgaddress{\street{Bijenicka cesta 32}, \city{Zagreb}, \postcode{10000}, \state{Croatia}, \country{Croatia}}}

\affil*[4]{\orgdiv{Dubrava University Hospital}, \orgaddress{\street{Avenija Gojka Šuška 6}, \city{Zagreb}, \postcode{10000}, \state{Croatia}, \country{Croatia}}}

%

\abstract{
Image inpainting involves filling in damaged or missing regions of an image by utilizing information from the surrounding areas. In this paper, we investigate a fully nonlinear partial differential equation inspired by the modified Cahn-Hilliard equation. Instead of using standard potentials that depend solely on pixel intensities, we consider morphological image enhancement filters that are based on a variant of the shock filter:
\begin{align*}
\partial_t u &= \Delta \left(-\nu \arctan(\Delta u)|\nabla u| - \mu \Delta u \right)+ \lambda(u_0 - u).
\end{align*}
This is referred to as the Shock Filter Cahn-Hilliard Equation. The equation is nonlinear with respect to the highest-order derivative, which poses significant mathematical challenges. 
To address these, we make use of a specific approximation argument, establishing the existence of a family of approximate solutions through the Leray-Schauder fixed point theorem and the Aubin-Lions lemma.
In the limit, we obtain a solution strategy wherein we can prove the existence and uniqueness of solutions. Proving the latter involves the Kruzhkov entropy type-admissibility conditions.

Additionally, we use a numerical method based on the convexity splitting idea to approximate solutions of the nonlinear partial differential equation and achieve fast inpainting results. To demonstrate the effectiveness of our approach, we apply our method to standard binary images and compare it with variations of the Cahn-Hilliard equation commonly used in the field.}

\keywords{image inpainting, Cahn-Hilliard equation, Shock filter equation, morphological image enhancement, well-posedness, entropy conditions}


\pacs[MSC Classification]{65M06,  94A08, 68U10, 47H10, 35K55, 80A22}

\maketitle

%
%
%










\startnewsection{Introduction}{sec:introduction}
Digital image inpainting is an important task in image processing applications that involves reconstructing unknown regions of an image based on information from its surrounding areas. We will refer to these unknown regions as the inpainting domain. For a given image $u_0$, defined on the image domain $\Omega \subset \mathbb{R}^2$, the objective is to restore the image on the inpainting domain  $\omega \subset \Omega$ so that the reconstructions are not easily detectable by an ordinary observer.

Drawing inspiration from manual inpainting techniques, Bertalmio et al. \cite{Belt00} were among the first to introduce an anisotropic PDE that smoothly propagates information from outside the inpainting domain  $\Omega\setminus \omega$ inside the domain $\omega$, following the direction of isophotes (contours of constant greyscale intensity). Anisotropic diffusion is a natural choice as it ensures the sharpness of the reconstructed image. Furthermore, they incorporated the isophote direction as a boundary condition at the edges of the inpainting domain. More precisely, this means that the equation
\begin{align}
u_t = -\nabla^\perp u \cdot \nabla\Delta u,
\end{align}
must be solved on $\omega$, where the perpendicular gradient  $\nabla^\perp  u = (-u_y,u_x)$ is the direction of smallest change in image intensity and $\Delta u$ is a measure of image smoothness. Because isophotes depend on the geometry of the inpainting domain and can intersect over time, it became necessary to intertwine this process with a diffusion procedure.

As discussed in \cite{Bert01}, this approach borrows ideas from computational fluid dynamics and applies them to image analysis challenges. One can visualize image intensity as a stream function for a two-dimensional incompressible flow. In this analogy, the Laplacian of the image intensity acts as the fluid's vorticity and is transported into the inpainting domain by a vector field defined by the stream function. Such an interpretation underscores the need for incorporating diffusion into the inpainting problem. 
Finally, this approach is designed to preserve isophotes while ensuring that the gradient vectors match at the boundary of the inpainting domain. This method is based on the Navier-Stokes equations of fluid dynamics, taking advantage of well-established theoretical and numerical results in that field. Since the success of the inpainting process depends on human observers' perceptions of edges, color, and texture, it is reasonable to model the missing edges using elastica-type curves \cite{Shen03, Than21, Ring18}.

More precisely, if we extend the inpainting domain $\omega$ in $\Omega$ and denote it by $\omega_\epsilon$, then we can extrapolate the isophotes of an image $u$ by a collection of curves $\{\gamma_t\}_{t \in [I_0, I_m]}$ with no mutual crossing, which coincides with the isophotes of $u$ on $\omega_\epsilon \setminus \omega$ and minimize the energy 
\begin{align}\label{third}
\int_{I_0}^{I_m}\!\!\int_{\gamma_t}(\alpha + \beta|\kappa_{\gamma_t}|^p) dsdt, \ \ \alpha,\beta\in \R^+,
\end{align}
where $[I_0, I_m]$ is the intensity span and $\kappa_{\gamma_t}$ is the curvature of $\gamma_t$. 
By adjusting parameters $\alpha$ and $\beta$ based on the specific application at hand, this energy effectively penalizes a generalized form of Euler's elastica energy.

In \cite{Shen03}, the authors introduced two inpainting models. The first is based on the seminal Mumford-Shah image model \cite{Mumf89}, while the second—termed the Mumford-Shah-Euler image model—serves as its high-order correction. This enhanced model augments the original by substituting the embedded straight-line curve model with Euler's elastica, a concept initially presented by Mumford in the context of curve modeling. Although this approach provides a more refined representation, it comes with a computational cost. Recognizing this challenge, efforts have been directed towards devising more efficient algorithms \cite{BS, Tai11, Zhu13}.

This type of model falls under variational inpainting, which incorporates the total variation inpainting \cite{Rudi92, Esed04, Chan06} and the active contour model based on the well-known Mumford and Shah segmentation \cite{Tsai01}. In particular, Esedoglu and Shen \cite{Esed02} introduced a modification of the Mumford–Shah segmentation model \cite{Shen02, Tsai01} that incorporates the curvature of edge contours into the functional. The curvature term provides control over both the position and direction of edges in the digital image. For a given image $u_0(x)$, one needs to minimize
\begin{align} \label{Ese}
MSE(u,K) = \int_{\Omega\setminus K} |\nabla u|^2 dx + \int_K (\alpha + \beta \kappa^2)ds + \lambda_0\int_{\Omega\setminus\omega}(u - u_0)^2dx, \ \ \alpha,\beta,\lambda_0\in \R^+,
\end{align}
where the unknown set $K$ represents the edge collection, i.e., the union of curves that approximate the edges of the given image $u_0(x)$, \( \kappa \) is the curvature, and  $ds$ is the length element. The last term in \eqref{Ese} is called the fidelity term and it penalizes deviations, in the $L^2$ sense, of the piecewise smooth function $u$ from the original image $u_0$ outside the inpainting domain. Several years later, Bertozzi et al. \cite{Bert07, Bert07d} proposed a binary image inpainting model that simplifies the previously presented Esedoglu–Shen model \eqref{Ese}. They also observed that the fourth-order gradient flow in the Esedoglu–Shen model shared similarities with a much simpler model called the Cahn–Hilliard equation.

\subsection{The Cahn-Hilliard equation}
The Cahn-Hilliard equation (also CHE in the sequel) is a macroscopic field model that describes the phase separation \cite{Cate18, Novi08} of a binary alloy at a fixed temperature. Beyond its primary application, the CHE is used in diverse domains, including pattern formation \cite{Zaks05, Zhao20}, biology \cite{Fis}, fluid dynamics \cite{Liu03, Han15, Maga13}, and most notably in image processing \cite{Burg09, Bosc14, Bosc15, Brki20, Brki18}.
\noindent More precisely, let $u_0: \Omega \subset \mathbb{R}^2 \to \{-1,1\}$ be a given binary image, and suppose that $\omega \subset \Omega$ denotes the inpainting domain. 
The interpolation $u: (0, T) \times \Omega \to \{-1,1\}$ of the original image $u_0$  is obtained as the solution of the modified CHE:
\begin{align}
\label{che-old}
u_t &= \Delta\left(\nu H'(u) - \mu\Delta u\right) + \lambda(\mx)(u_0 - u), \;\; \text{on} \;\;  (0, T) \times \Omega,\\
\label{ic-bc-1}
u\big|_{t=0} &= u_0(\mx), \ \ 
\end{align} together with the appropriate boundary conditions that will be discussed later. Here,  the constants $\mu, \nu > 0$ depend on a concrete situation, and for some large $\lambda_0 > 0$, the fidelity term $\lambda: \Omega \to \mathbb{R}$  is defined as follows
\begin{align}
\label{lam}
\lambda(\mx) = \begin{cases} \lambda_0, \;\;\; \text{if } \mx\in \Omega \setminus\omega, \\  0, \;\;\; \text{if } \mx\in \omega. \end{cases}
\end{align}


In the special case when $H(u) = \frac{1}{2}(1 - u^2)^2$ (termed the double-well potential), equation \eqref{che-old} reduces to the well-known CHE, as detailed in the original paper \cite{Cahn}. Before delving into various forms of $H$, let us briefly comment on the fidelity term $\lambda$. 

The fidelity term plays a crucial role in the equation, ensuring that the inpainted image $u$ closely resembles the original image $u_0$ outside the damaged areas.
Indeed, for large $\lambda_0$, the term $\lambda(\mathbf{x})(u - u_0)$ must not be large, or it cannot be controlled by the rest of the equation. Therefore, for large $\lambda_0$, the inpainted image $u$ must be close to the original image in the image domain $\Omega \setminus \omega$ (since the product $\lambda_0(u - u_0)$ is moderate and, roughly speaking, can be controlled by $u_t - \Delta\left(-\nu H'(u) - \mu\Delta u\right)$; see also \eqref{Ese}). We note in passing that this is also the reason why the boundary data for \eqref{che-old} are not of substantial importance and it will have an influence only in a very small neighborhood of the boundary (assuming that the inpainted domain is away from the boundary). Moreover, the presence of $\lambda$ ensures that the form of the image has no influence on the inpainting away from the inpainting domain and we can always extend the image domain and force boundary conditions of our choice. 
Let us now discuss choices of $H$.

\subsection{The choice of nonlinear potential $H$}
The functional $H(u)$ represents a nonlinear potential with two wells corresponding to $u=-1$ and $u=1$. In the context of inpainting binary images, the values $u=1$ and $u=-1$ denote the white and black pixels, respectively. In short, since $H$ drives the pixel intensities toward one of these wells, one could, borrowing from machine learning terminology, view $H$ as a form of classifier.
In the recent literature, the search for the physically meaningful potential $H$ has been extensive. The initial motivation for this arose from a simple observation that the solutions $u$ of the CHE equipped with double-well potential do not stay bounded to the interval $[-1,1]$. In many applications, this violates the inherent physical laws governing the system. 
However, from a computational standpoint, one can address this issue using thresholding. These methods redefine any values of $u > 1$ as $u = 1$ and any values of $u < -1$ as $u = -1$. For example, this approach can be found in the research by Cherfils et al. \cite{Cher15d}, wherein the numerical scheme is paired with thresholding in its final steps.

Another possibility is to utilize the non-smooth logarithmic potentials \cite{Cher11, Cope92}:
\begin{align}\label{non-smooth}
H_{log}(u) = \frac{\theta}{2}\left ( (1+u)\log(1+u) + (1 - u) \log(1 - u)  \right ) +  \frac{\theta_c}{2}(1 - u^2),
\end{align}
where $0< \theta <\theta_c$ is a constant. The singularity of $H_{log}'(u)$ furthermore imposes the bounds on $u \in (-1,  1)$.
The application of a non-smooth logarithmic potential in image inpainting tasks has been demonstrated to decrease the convergence time \cite{Cher15}.

In recent years, a version of the CHE that uses a non-smooth potential has been studied in works such as \cite{Bosc14, Garc18} for image inpainting problems. This approach involves using a non-smooth double obstacle potential, defined as:
\begin{align}\label{non-smooth-1}
H_{quad}(u) = \psi(u) + \frac{1}{2}(1 - u)^2, \;\;\;\;\;\; \psi(u) = I_{[-1,1]}(u) =  \begin{cases} 0, \;\;\; \text{if } u \in [-1,1],\\   +\infty,  \;\;\; \text{otherwise,}\end{cases}
\end{align}
where $I_A$ is the indicator function of the set $A$. In \cite{Bosc14}, authors developed an efficient finite-element solver for image inpainting problems and demonstrated that non-smooth potential produces superior results when compared to the double well potential.

At the end of this subsection, we note that a well-posedness result for a more general situation of \eqref{che} with so-called degenerate diffusion mobility (i.e. the diffusion coefficient depends on $u$ as well) was obtained in \cite{DD}.   

\subsection{Numerical solutions based on the convexity splitting approach}
An additional advantage of this approach is the availability of efficient numerical methods for solving the CHE \cite{Bosc14, Chen08, Bert11}. Here, we will employ ideas based on the convexity splitting introduced by Eyre \cite{Eyre98}, and later investigated by several researchers \cite{Chen08, Glas16, Gome11, Shin17}. In the convexity splitting scheme, the energy functionals are divided into convex and concave parts. The main idea is to treat the convex part implicitly and the concave part explicitly.
Namely, let $\mathcal{H}$ be a Hilbert space and $\mathcal{E} = \mathcal{E}_1 - \mathcal{E}_2$ energy such that it could be written as the difference of two convex energies $\mathcal{E}_1$ and $\mathcal{E}_2$. Then, the discretization
\begin{align}\label{fede}
\frac{u^{n+1} - u^n}{\Delta t} = -\nabla_\mathcal{H} \mathcal{E}_1(u^{n+1}) - \nabla_\mathcal{H} \mathcal{E}_2(u^{n}), 
\end{align}
of the gradient flow 
\begin{align}\label{gradfl}
u_t = -\nabla_\mathcal{H} \mathcal{E}(u), 
\end{align}
satisfies the energy stability property  $\mathcal{E}(u^{n+1}) \leq \mathcal{E}(u^{n})$. Moreover, under the appropriate conditions \cite{Rain20}, this approach can allow for an unconditionally stable time-discretization scheme.
The CHE is derived as the $H^{-1}$ gradient flow of the Ginzburg--Landau energy defined in the following way:
\begin{align}\label{chee}
\mathcal{E}_1[u] = \int_{\Omega} \left[\nu H(u(\mx)) + \frac{1}{2} \mu |\nabla u(\mx)|^2\right]d\mx,
\end{align}
where $u$ typically denotes the concentration, $H(u)$ is the Helmholtz free energy density, 
$\varepsilon$ is proportional to the thickness of the transition region between two phases. For further insights into the derivation of both first and second-order unconditionally energy-stable numerical schemes based on equation \eqref{chee}, the work in \cite{Rain20} serves as a comprehensive reference.

In addition, one could derive the fidelity term as the $L^2$ gradient flow of energy defined by
\begin{align}\label{fede-1}
\mathcal{E}_2[u] = \frac{\lambda}{2}\int_{\Omega \setminus \omega} \left(u_0 - u\right)^2 d\mathbf{x}.
\end{align}
Let us note that the modified CHE \eqref{che-old} is not strictly a gradient flow but can be interpreted as the superposition of gradient descent with respect to the $H^{-1}$ inner product for the energy defined by \eqref{chee} and the gradient descent with respect to the $L^2$ inner product defined by \eqref{fede-1}.

As we will see in Section 4, the numerical method based on convexity splitting can be applied to the modified CHE \eqref{che-old} by applying it separately to the functionals $\mathcal{E}_1$ and $\mathcal{E}_2$ separately.

\subsection{The current contribution}
Recently, in \cite{Nova22}, a regularized version of the shock filters  $-\sgn ({\Delta u}) |\nabla u|$ developed by Rudin and Osher \cite{Osh} was considered as a potential choice for the generalization of the functional $H$. Here, $ -\sgn ({\Delta u})$  is an edge-detection term that exhibits a changing sign across any essential singular feature so that the local flow field is governed toward the features, and $|\nabla u|$ is the magnitude of the gradient.
The main idea revolves around utilizing a dilation process near image maxima and an erosion process around image minima. To determine the influence zone of each maximum and minimum, we employ the Laplacian as a classifier, borrowing terminology from machine learning. If the Laplacian is negative, the pixel in question is assigned to the influence zone of a maximum; conversely, if the Laplacian is positive, the pixel is considered to belong to the influence zone of a minimum. When applied to the digital image, this equation creates strong discontinuities at image edges and a piecewise constant segmentation within regions of similar grayscale intensities. 
Shock filter belongs to the class of morphological image enhancement equations \cite{Bua06, Gil04, Cal10, Sim21}. 
From the mathematical point of view, this equation satisfies a maximum–minimum principle which means that the pixel intensities of the processed image will remain within the range of the original image.  Besides the mathematical analysis of the regularized equation, the authors \cite{Nova22} demonstrated that this equation shows the tendency to extend image structures while preserving image sharpness, therefore eliminating the diffusive effects visible in the case of a CHE with a double-well potential. However, in certain examples, this modification of CHE has demonstrated an uncritical tendency to form edges, and additional steps are needed to control this effect. In this paper, we will address this issue by proposing a more regular classifier $-|\nabla u| \arctan({\Delta u})$ that, in applications, has a less expressed tendency towards edge formation. The problem to be considered here thus has the form
\begin{align}
\label{che}
\pa_t u &= \Delta \Big(-\nu \arctan(\Delta u )|\nabla u| - \mu \Delta u \Big)+ \lambda(u_0 - u) \;\; \text{on} \;\;  (0, T) \times \Omega,\\
\label{ic}
u(t,\mx) &= u_0(\mx) \;\; \text{on} \;\;  \{t = 0\} \times \Omega,\\
\label{bc}
u(t,\mx)&= \frac{\pa u(t,\mx)}{\pa \vec{n}} = 0   \;\; \text{on} \;\;  [0, T]\times \partial\Omega .
\end{align} 
We denote this as the Shock Filter Cahn-Hilliard equation. In addition to conducting numerical simulations on standard inpainting examples, we will undertake a comprehensive mathematical analysis of the equation. This analysis will include demonstrating the compactness of approximate solutions using the Leray-Schauder fixed-point theorem and the Aubin-Lions lemma. Through this process, we will establish a solution framework within which we can prove the stability of solutions to this equation. The framework bears resemblance to the entropy admissibility conditions introduced in Kruzhkov's work \cite[Definition 1]{Kru}, hence we term functions satisfying the equation within this framework "entropy solutions."  It is worth noting that, unlike the entropy admissibility concept from Kruzhkov's work, we utilize entropies of the form $\eta(u)=(u-\varphi)^2$, where $\varphi\in Lip([0,T];H^2(\Omega))$.

\subsection{Mathematical aspects}
The variants of the CHE presented in \eqref{che-old} hold not only significant practical importance but also a rich mathematical structure. Numerous existence and uniqueness results can be found for the equation. Typically, the latter demonstrates that the equation has an appropriate physical background; processes in nature are invariably predictable under fixed conditions. This also suggests that different numerical methods should converge towards the same result.

Regarding previous work on the subject, an exhaustive overview can be found in \cite{Mir_book}. Roughly speaking, it is always possible to establish local existence and uniqueness for \eqref{che-old}, but global existence depends on the growth rate of the potential $H$. Besides the standard double-well potential for $H$, we particularly mention the situation of the logarithmic type potential, which is regular inside the interval $(-1,1)$ and was introduced in \cite{Cope92}. In \cite{Cher15}, one can find proof of the local well-posedness, while in \cite{MirAIMS}, the author presents an intricate proof of the global well-posedness. The concept of the so-called double obstacle potential was first introduced in \cite{BE} and subsequently explored in the context of the inpainting problem in \cite{Garc18}. In these studies, the authors established the well-posedness of the problem, demonstrating that the function $u$ satisfying the problem meets a set of equations and an inequality.

In the current work, we encounter a situation somewhat akin to that in \cite{BE, Garc18}. However, instead of dealing with an irregular coefficient, we confront the presence of a second-order derivative term under a nonlinear function. Consequently, we find it necessary to formulate a generalization of the standard weak solution concept and involve corresponding admissibility conditions to ensure the well-posedness of our problem. These requisites are consolidated in Definition \ref{def-admissibility} and termed entropy solutions. It is worth noting that the inequality outlined in Definition \ref{def-admissibility} corresponds to \cite[(1.16)]{BE} and \cite[(1.17)]{BE}, or alternatively, \cite[(1.5a)]{Garc18} and \cite[(1.5b)]{Garc18}.

As for the existence proof, our first step is to approximate the operator on the right-hand side of \eqref{che} using convolution operators at suitable points. Then, via the Leray-Schauder fixed-point theorem, we aim to establish the existence of a family of solutions, denoted as $(u_\epsilon)$, corresponding to the regularized equation. By letting the regularization parameter go to zero and using the fact that $\arctan$ is an increasing function, we reach an inequality that will represent our solution concept.

We note that this is not an unusual situation, as one can infer from the entropy solution concept in \cite[Definition 1]{Kru} or the viscosity solution concept in \cite[Definition 2.2]{Lio}. However, in both of the mentioned approaches, a smooth entropy or viscosity solution is also a solution in the classical sense. This is not the case with our solution framework, but to some extent, it can be overcome with the help of Young measures \cite[Theorem 1]{DiPM}. We shall comment on this in more detail at the end of Section 3.

Regarding the boundary conditions, we have already explained that from a practical point of view, they are not of substantial importance. However, this could be an obstacle in the theoretical analysis of the equation. We note that \eqref{che}  augmented with the Dirichlet conditions
\begin{equation}
\label{dirichlet}
u(t,\cdot)\big|_{\pa \Omega}=\Delta u(t,\cdot)\big|_{\pa \Omega}=0,
\end{equation} or the Neumann conditions
\begin{equation}
\label{neumann}
\frac{\pa u(t,\cdot)}{\pa \vec{n}}\Big|_{\pa \Omega}=\frac{\pa (\Delta u(t,\cdot))}{\pa \vec{n}}\Big|_{\pa \Omega}=0,
\end{equation} 
are handled similarly since we can construct approximate solutions via the Galerkin approximation (see Section 3.2). A problem here lies in the fact that neither of the latter conditions (Dirichlet or Neumann) allows for the elimination of boundary terms in \eqref{che} when deriving the energy estimate. Namely, informally speaking, if we multiply \eqref{che} by $u$ and integrate over $[0,T] \times \Omega$, upon application of integration by parts, some boundary terms will remain on the right-hand side. Moreover, it is not clear what the boundary conditions mean since we do not have the existence of strong boundary traces for granted. Indeed, due to the presence of the nonlinear term $\arctan(\Delta u)$, the solution $u$ might not be of bounded variation. Therefore, we shall (implicitly) include the boundary conditions in the variational formulation of \eqref{che}, \eqref{bc}, \eqref{ic} (see Definition \ref{def-admissibility}).

We shall separately comment on the case of the Dirichlet and Neumann boundary conditions in Section 3.2. By requiring that a solution belongs to the span of appropriate eigenvectors of the Laplace operator, it seems possible to formalize the latter situations as well. However, we find the construction slightly artificial. Therefore, we decided to go with the boundary conditions given in \eqref{bc}.

\subsection{Organization of the paper}
The paper is organized as follows. In Section 2, we recall notions and notations that we are going to use throughout the paper. In Section 3, we consider the mathematical analysis of equation  \eqref{che}, \eqref{ic}, \eqref{bc} where we use the Leray-Schauder fixed point theorem and Aubin-Lions lemma to prove the existence of the solution to the regularized problem. Then, we motivate and introduce the entropy solutions concept and prove the stability result. In Section 4 we provide the numerical procedure based on the convexity splitting method, and finally, in Section 5 we present the inpainting results of various binary shapes. We conclude the paper with final comments and a summary of the main results.

\section{Auxiliary notions and notations}

In this section, we shall introduce the notions and notations that we will use. In particular, we shall recall the Leray-Schauder fixed-point theorem and the Aubin-Lions lemma.

Constants $C(A)$ and $\tilde{C}(A)$ will denote positive generic constants depending on $A$. The operator $\nabla$ and $\Delta$ are always taken with respect to $\mx \in \Omega$ only. 

Let $\rho: \R\to \R_0^+$  be a smooth, compactly supported function, such that
\begin{align}\label{molif1}
\int_\R \rho(z) dz = 1,  \;\;\; \text{ and } \;\;\;
\rho_\eps(\mx) = \frac{1}{\eps^2}\rho\left(\frac{x_1}{\eps}\right)\rho\left(\frac{x_2}{\eps}\right), \ \ \mx=(x_1,x_2).
\end{align} We note that in the sense of distribution, it holds
$$
\rho_\eps \rightharpoonup \delta(x_1)\otimes \delta(x_2) \ \ {\rm as} \ \ \eps\to 0 \ \ {\rm in} \ \ {\cal D}(\R^2).
$$ For a measurable function $f$ defined on $[0, T]\times \Omega $, we define the standard convolution with respect to the spatial variable
\begin{align}
f_\eps(t,\mx):=f(t,\mx)\star_\mx \rho_\eps=\int_{\Omega} f(t,\my)\rho_\eps(\mx-\my)d\my, \ \ t\in [0, T], \; \mx\in \Omega.
\end{align} We stress that in the sequel, we shall distinguish between the $\mx$-convolution $f_\eps$ of a function $f$, and a function $f(t,\mx;\eps)$ depending on the parameter $\eps$. In particular, when we write $f_\eps(\cdot;\eps)$ we imply the function $f_\eps(t,\mx;\eps)=f(t,\mx;\eps)\star \rho_\eps$.    

%

In order to prove the existence of the solution we shall need the Leray-Schauder fixed point theorem and Lions-Aubins lemma \cite{Rou}. We recall them here for readers' convenience.
\begin{theorem} [Leray-Schauder fixed point theorem]
Let ${\cal T}$ be a continuous and compact mapping of a Banach space $X$ into itself, such that the set
$$
 \{x\in X: \, x=\sigma {\cal T}x \text{ for some }0\leq \sigma \leq 1\},
$$ is bounded. Then ${\cal T}$ has a fixed point.
\end{theorem}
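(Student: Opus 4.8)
The plan is to derive this statement from Schauder's fixed point theorem (a continuous and compact self-map of a nonempty, closed, bounded, convex subset of a Banach space has a fixed point) by means of a radial-retraction device, the role of the assumed a priori bound being to exclude the ``escaping'' scenario. Since $0$ belongs to the set $S:=\{x\in X:\ x=\psi\mathcal{T}x\ \text{for some}\ 0\le\psi\le1\}$ (take $\psi=0$) and $S$ is bounded, I first fix a radius $R>0$ with $\|x\|<R$ for every $x\in S$, for instance $R=1+\sup_{x\in S}\|x\|$. On the closed ball $\bar B_R:=\{x\in X:\ \|x\|\le R\}$ I introduce the radial retraction $r\colon X\to\bar B_R$, $r(x)=x$ for $\|x\|\le R$ and $r(x)=Rx/\|x\|$ for $\|x\|>R$, which is continuous, and I set $\mathcal{G}:=r\circ\mathcal{T}$.

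Next I would check that $\mathcal{G}$ meets the hypotheses of Schauder's theorem on $\bar B_R$: it maps $\bar B_R$ into $\bar B_R$ by construction, it is continuous as a composition of continuous maps, and it is compact because $\mathcal{T}(\bar B_R)$ is relatively compact by assumption, so that $\mathcal{G}(\bar B_R)\subset r\big(\overline{\mathcal{T}(\bar B_R)}\big)$ with the latter set compact. Schauder's theorem then furnishes $x_0\in\bar B_R$ with $x_0=r(\mathcal{T}x_0)$.

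The concluding step is to upgrade this to $x_0=\mathcal{T}x_0$. If $\|\mathcal{T}x_0\|\le R$, then $r(\mathcal{T}x_0)=\mathcal{T}x_0$ and there is nothing left to prove. Otherwise $\|\mathcal{T}x_0\|>R>0$, so $x_0=R\,\mathcal{T}x_0/\|\mathcal{T}x_0\|=\psi\,\mathcal{T}x_0$ with $\psi:=R/\|\mathcal{T}x_0\|\in(0,1)$; hence $x_0\in S$, which forces $\|x_0\|<R$, whereas $\|x_0\|=\|r(\mathcal{T}x_0)\|=R$ --- a contradiction. Thus $\mathcal{T}$ has a fixed point.

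I expect the few delicate points to be purely bookkeeping: one must pick $R$ strictly above $\sup_{x\in S}\|x\|$ (so that $x\in S$ yields the strict inequality $\|x\|<R$ used in the contradiction), and one must use ``$\mathcal{T}$ compact'' in the form ``the image of a bounded set is relatively compact''. The only substantive external input is Schauder's fixed point theorem itself; a self-contained alternative is to approximate the compact operator $\mathcal{T}$ by finite-rank maps and invoke Brouwer's theorem, or --- more slickly --- to argue by Leray--Schauder degree: the a priori bound guarantees $x-\psi\mathcal{T}x\neq0$ on $\partial B_R$ for all $\psi\in[0,1]$, so homotopy invariance gives $\deg(I-\mathcal{T},B_R,0)=\deg(I,B_R,0)=1\neq0$ and therefore a zero of $I-\mathcal{T}$ inside $B_R$.
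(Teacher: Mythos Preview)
Your proof is correct and is in fact the standard reduction of the Leray--Schauder alternative to Schauder's theorem via the radial retraction. However, the paper does not supply its own proof of this statement: it merely recalls the Leray--Schauder fixed point theorem (together with the Aubin--Lions lemma) in Section~2 as a known tool, citing \cite{Rou}, and then applies it in the proof of Theorem~\ref{T1}. So there is no argument in the paper to compare yours against.
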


\begin{theorem}[Aubin-Lions lemma]
Let $X_0$, $X$ and $X_1$ be three Banach spaces with $X_0\subset X \subset  X_1$. Suppose that $X_0$ is compactly embedded in $X$ and that $X$ is continuously embedded in $X_1$. For $1 \leq p, q  \leq \infty$, let
$$ 
W=\{u\in L^{p}([0,T];X_{0})| \, {\pa_t {u}}\in L^{q}([0,T];X_{1})\}.
$$

(i) If $p  < \infty$, then the embedding of $W$ into $L^p([0, T]; X)$ is compact.

(ii) If $p  = \infty$ and $q  >  1$, then the embedding of $W$ into $C([0, T]; X)$ is compact.

\end{theorem}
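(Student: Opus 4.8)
\emph{Sketch of a proof.} The plan is to derive both parts from two ingredients: an Ehrling-type interpolation inequality and the Fr\'echet--Kolmogorov characterization of compactness in Bochner spaces (in the vector-valued form due to Simon). The preliminary step is the interpolation inequality: since $X_{0}\hookrightarrow X$ is compact and $X\hookrightarrow X_{1}$ is continuous, for every $\eta>0$ there is $C_{\eta}>0$ with
\begin{align*}
\|v\|_{X}\leq \eta\,\|v\|_{X_{0}}+C_{\eta}\,\|v\|_{X_{1}}\qquad\text{for all } v\in X_{0}.
\end{align*}
I would prove this by contradiction: a violating sequence $(v_{n})$, normalized so that $\|v_{n}\|_{X}=1$, is bounded in $X_{0}$, hence a subsequence converges in $X$; but the failure of the inequality forces $\|v_{n}\|_{X_{1}}\to 0$, so the $X$-limit vanishes, contradicting $\|v_{n}\|_{X}=1$.

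For part (i), let $(u_{n})$ be bounded in $W$, say $\|u_{n}\|_{L^{p}(0,T;X_{0})}+\|\partial_{t}u_{n}\|_{L^{q}(0,T;X_{1})}\leq M$. First I would show that $(u_{n})$ is relatively compact in $L^{p}(0,T;X_{1})$ by verifying Simon's two conditions. Equi-smallness of the time translates: from $u_{n}(t+h)-u_{n}(t)=\int_{t}^{t+h}\partial_{t}u_{n}(s)\,ds$ and H\"older's inequality in time (Fubini's theorem in the endpoint case $q=1$),
\begin{align*}
\sup_{n}\,\|u_{n}(\cdot+h)-u_{n}(\cdot)\|_{L^{p}(0,T-h;X_{1})}\longrightarrow 0\qquad\text{as } h\to 0 .
\end{align*}
Compactness of mean values: for $0\le t_{1}<t_{2}\le T$, H\"older gives $\big\|\int_{t_{1}}^{t_{2}}u_{n}(s)\,ds\big\|_{X_{0}}\le (t_{2}-t_{1})^{1-1/p}M$, so $\{\int_{t_{1}}^{t_{2}}u_{n}(s)\,ds\}_{n}$ is bounded in $X_{0}$, hence relatively compact in $X$ and, a fortiori, in $X_{1}$. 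Simon's criterion then yields a subsequence $(u_{n_{k}})$ converging in $L^{p}(0,T;X_{1})$. To finish I would integrate the interpolation inequality in time to obtain
\begin{align*}
\|u_{n_{k}}-u_{n_{l}}\|_{L^{p}(0,T;X)}\leq 2\eta M+C_{\eta}\,\|u_{n_{k}}-u_{n_{l}}\|_{L^{p}(0,T;X_{1})},
\end{align*}
and, since $\eta>0$ is arbitrary, conclude that $(u_{n_{k}})$ is Cauchy in the complete space $L^{p}(0,T;X)$; thus $W\hookrightarrow L^{p}(0,T;X)$ compactly.

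Part (ii) I would treat in the same way, with the Arzel\`a--Ascoli theorem in place of Simon's criterion. When $q>1$ the bound $\|u_{n}(t)-u_{n}(s)\|_{X_{1}}\le |t-s|^{1-1/q}\|\partial_{t}u_{n}\|_{L^{q}(0,T;X_{1})}$ gives equicontinuity of $(u_{n})$ in $C([0,T];X_{1})$ --- this is precisely where $q>1$ enters --- which, combined with the interpolation inequality, upgrades to equicontinuity in $C([0,T];X)$; pointwise relative compactness of $\{u_{n}(t)\}$ in $X$ follows by approximating $u_{n}(t)$ by the averages $h^{-1}\int_{t-h}^{t}u_{n}(s)\,ds$, which are bounded in $X_{0}$ uniformly in $n$, and using the equicontinuity to control the remainder. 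The step I expect to demand the most care is exactly this transfer of compactness from the ambient space $X_{1}$ down to $X$ through the Ehrling inequality, together with the treatment of the limiting exponents --- Fubini's theorem for $q=1$ in (i) and the essential use of $q>1$ in (ii); once the interpolation inequality is in hand, the remainder is routine bookkeeping with Bochner-space estimates.
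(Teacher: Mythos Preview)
Your sketch is a correct outline of the standard Simon-type proof of the Aubin--Lions lemma: Ehrling's inequality plus a Fr\'echet--Kolmogorov/Simon criterion for part (i), and Ehrling's inequality plus Arzel\`a--Ascoli for part (ii). There is nothing to compare against, however, because the paper does not prove this theorem. It is stated in the section on auxiliary notions and notations with the preface ``we recall them here for readers' convenience'' and a citation to Roub\'{\i}\v{c}ek's monograph; no proof is given or attempted. The Aubin--Lions lemma is used as a black box in the proofs of Theorem~\ref{T1} and Theorem~\ref{existence}, so your write-up goes well beyond what the paper itself supplies.
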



\section{Mathematical analysis of \eqref{che}, \eqref{ic}, \eqref{bc}}
In this section, we will address the mathematical aspects of existence and uniqueness of a weak solution (in an appropriate sense) of problem \eqref{che}, \eqref{ic}, \eqref{bc}. 
With the notation from the previous section, we start the analysis by considering a regularized equation of the following form
\begin{align}
\label{regularized}
\pa_t u(\,\cdot\,;\eps) &= \Delta \Big(-\nu \big( \arctan(\Delta u(\,\cdot \,;\eps) \star_\mx \rho_\eps)|\nabla u(\,\cdot \,;\eps)|\big)\star_\mx \rho_\eps - \mu \Delta u(\,\cdot\,;\eps) \Big) + \lambda(u_0 - u(\,\cdot\,;\eps))\\& :=\Delta \Big(-\nu \,\big(\arctan(\Delta u_\eps)|\nabla u|\big)_\eps - \mu \Delta u \Big) + \lambda(u_0 - u),
\nonumber
\end{align} where we denote $u=u(\,\cdot\,;\eps)$. We shall use the same notation in the sequel when there is no risk to make a confusion. We impose the same initial and boundary data as for \eqref{che} i.e. we take 
\begin{align}
\label{ic-bc}
 u(0,\mx;\eps)=u_0(\mx) \ \ {\rm and} \ \  u(\,\cdot\,;\eps) &= \frac{\partial u(\,\cdot\,;\eps)}{\partial \vec{n}} = 0   \;\; \text{on} \;\; [0,T]\times \partial\Omega.
\end{align}  

The plan is to prove an existence result for \eqref{regularized}, \eqref{ic-bc} and then to show that the family of solutions $(u(\,\cdot\,;\eps))_{\eps>0}$ to the regularized equations is uniformly bounded in $L^2([0,T];H^2(\Omega))\cap H^1([0,T];(H^{2}(\Omega))')$, $T>0$ (and thus strongly precompact in $L^2([0,T];H^1(\Omega))$ by the Aubin--Lions lemma).  Finally, by letting $\eps \to 0$, we introduce a solution concept in the frame of which we are able to prove well-posedness results for \eqref{che}, \eqref{ic}, \eqref{bc} as well.

We begin with the following theorem.

\begin{theorem}
\label{T1} For a fixed $\eps>0$, there exists a solution $u(\cdot;\eps)\in L^2([0,T];H^2(\Omega))$, $T>0$, to \eqref{regularized}, \eqref{ic-bc} whose bound in the $L^2([0,T];H^2(\Omega))$-norm is independent of $\eps$.
\end{theorem}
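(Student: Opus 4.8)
The plan is to obtain $u(\,\cdot\,;\eps)$ as a fixed point of a compact map built from the linear part of \eqref{regularized}, and then to close an energy estimate that does not see the mollification parameter. Fix $\eps>0$ and work in $X=L^2([0,T];H^1_0(\Omega))$, extending functions on $\Omega$ by zero before convolving with $\rho_\eps$. For $v\in X$ let $\mathcal{T}v$ be the unique weak solution $w$ of the linear fourth-order (clamped-plate) parabolic problem
\begin{align*}
\pa_t w+\mu\Delta^2 w+\lambda w=-\Delta\big(\nu\,\arctan(\Delta v_\eps)\,|\nabla v|\big)_\eps+\lambda u_0,\qquad w(0,\cdot)=u_0,
\end{align*}
with the boundary conditions $w=\tfrac{\pa w}{\pa\vec{n}}=0$ on $\pa\Omega$. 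The double mollification makes this legitimate: $\Delta v_\eps$ is a bounded, smooth-in-$\mx$ function for a.e.\ $t$, so $\arctan(\Delta v_\eps)\in L^\infty$ with $|\arctan(\Delta v_\eps)|\le\tfrac{\pi}{2}$ and the right-hand side lies in $L^2([0,T];L^2(\Omega))$. Classical linear theory for $\Delta^2$ with the boundary conditions \eqref{bc} (spectral decomposition of the clamped-plate operator, or a Galerkin scheme) then gives $w\in L^2([0,T];H^2(\Omega))\cap C([0,T];L^2(\Omega))$ with $w(t)\in H^2_0(\Omega)$ a.e.\ and $\pa_t w\in L^2([0,T];H^{-2}(\Omega))$, so $\mathcal{T}(X)\subset X$.

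Next I would verify that $\mathcal{T}$ is continuous and compact. If $v_n\to v$ in $X$, then $|\nabla v_n|\to|\nabla v|$ in $L^2([0,T]\times\Omega)$ and $\Delta v_{n,\eps}\to\Delta v_\eps$ in $L^2([0,T];L^\infty(\Omega))$, since both differentiation and the convolution fall on the fixed kernel $\rho_\eps$; because $\arctan$ is bounded and $1$-Lipschitz, a subsequence-plus-(generalized) dominated-convergence argument shows the regularized nonlinearity converges in $L^2([0,T];L^2(\Omega))$, and continuity of the linear solution operator yields $\mathcal{T}v_n\to\mathcal{T}v$ in $X$. For compactness, $\mathcal{T}$ maps any bounded subset of $X$ into a set bounded in $L^2([0,T];H^2(\Omega))$ whose time derivatives are bounded in $L^2([0,T];H^{-2}(\Omega))$; since $H^2(\Omega)$ is compactly embedded in $H^1(\Omega)$ and $H^1(\Omega)\hookrightarrow H^{-2}(\Omega)$, the Aubin-Lions lemma gives precompactness in $X$.

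The core of the argument is the a priori bound on $\{u\in X:\,u=\psi\mathcal{T}u,\ 0\le\psi\le1\}$. Such a $u$ satisfies $u(t)\in H^2_0(\Omega)$ and $\pa_t u+\mu\Delta^2 u+\lambda u=\psi\big[-\Delta(\nu\arctan(\Delta u_\eps)|\nabla u|)_\eps+\lambda u_0\big]$ with $u(0)=\psi u_0$. Testing with $u$, the boundary conditions $u=\tfrac{\pa u}{\pa\vec{n}}=0$ are precisely what make $\int_\Omega\Delta F\,u\,d\mx=\int_\Omega F\,\Delta u\,d\mx$ hold with no boundary remainder, so using $\lambda\ge0$, $\|\rho_\eps\|_{L^1}=1$ and $|\arctan|\le\tfrac{\pi}{2}$ (hence $\|(\arctan(\Delta u_\eps)|\nabla u|)_\eps\|_{L^2}\le\tfrac{\pi}{2}\|\nabla u\|_{L^2}$, a bound independent of $\eps$), one obtains
\begin{align*}
\tfrac12\tfrac{d}{dt}\|u\|_{L^2}^2+\mu\|\Delta u\|_{L^2}^2\le\tfrac{\pi\nu}{2}\|\nabla u\|_{L^2}\|\Delta u\|_{L^2}+\lambda_0\|u_0\|_{L^2}\|u\|_{L^2}.
\end{align*}
Interpolating $\|\nabla u\|_{L^2}^2\le\|u\|_{L^2}\|\Delta u\|_{L^2}$ and applying Young's inequality absorbs the critical term into $\mu\|\Delta u\|_{L^2}^2$; Gronwall's inequality then controls $\|u\|_{L^\infty([0,T];L^2)}$ and $\|\Delta u\|_{L^2([0,T]\times\Omega)}$, and elliptic regularity on $H^2(\Omega)\cap H^1_0(\Omega)$ upgrades this to a bound on $\|u\|_{L^2([0,T];H^2(\Omega))}$ depending only on $T,\mu,\nu,\lambda_0,\Omega$ and $\|u_0\|_{L^2}$ — on neither $\eps$ nor $\psi$. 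The Leray-Schauder fixed point theorem then produces a fixed point $u(\,\cdot\,;\eps)=\mathcal{T}u(\,\cdot\,;\eps)$ solving \eqref{regularized}, \eqref{ic-bc}, and the same estimate at $\psi=1$ gives the claimed $\eps$-uniform bound.

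The main obstacle is exactly this last $\eps$-uniform estimate, which succeeds only thanks to three structural features of the model: the classifier $\arctan$ is bounded, so the nonlinear term costs at most $O(\|\nabla u\|_{L^2})$ however wild $\Delta u_\eps$ is; the mollifier is $L^1$-normalized, so convolution never inflates an $L^2$ norm; and the boundary conditions \eqref{bc}, unlike the Dirichlet or Neumann pair \eqref{dirichlet}--\eqref{neumann}, annihilate every boundary contribution in the energy identity. By contrast, showing that $\mathcal{T}$ is well defined, continuous and compact is comparatively routine once the double mollification is exploited to tame $\arctan(\Delta v_\eps)$.
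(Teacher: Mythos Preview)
Your proposal is correct and follows essentially the same route as the paper: freeze the nonlinearity to define a solution map on $L^2([0,T];H^1)$, obtain compactness of this map via Aubin--Lions with the triple $H^2\hookrightarrow H^1\hookrightarrow H^{-2}$, and close the Leray--Schauder a priori bound by testing with $u$, using $|\arctan|\le\tfrac{\pi}{2}$ together with $\|\rho_\eps\|_{L^1}=1$ to make the nonlinear term cost only $O(\|\nabla u\|_{L^2})$, then interpolating $\|\nabla u\|$ between $\|u\|$ and $\|\Delta u\|$ and finishing with Young and Gronwall. Your write-up is in fact a bit more explicit than the paper's on the continuity of $\mathcal{T}$ and on the precise interpolation step, but the architecture is identical.
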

\begin{rem}
It is not difficult to see that the solution $u(\cdot;\eps)$ to \eqref{regularized}, \eqref{ic-bc} stated in the previous theorem is unique. However, this is not of particular importance for the paper as a whole.
\end{rem}
\begin{proof}
We are going to use a fairly standard fixed-point theorem technique. To this end, we define the mapping ${\cal T}: L^2([0,T];H^1(\Omega))\to L^2([0,T];H^1(\Omega))$ such that for a given $v\in L^2([0,T];H^1(\Omega))$, 
we take ${\cal T}(v)=u$ as the solution to the problem
 	\begin{equation} 
\label{regularized-LS}
\begin{split}
\pa_t u &= \Delta \Big(\big(-\nu\arctan(\Delta v_\eps)|\nabla v|\big)_\eps - \mu \Delta u \Big) + \lambda(u_0 - u),\\
u|_{t=0}&=u_0(\mx),  \ \ \ \ \ u|_{\pa \Omega}=  \frac{\pa u}{\pa \vec{n}}\big{|}_{\pa \Omega} =0.
\end{split}
\end{equation} The existence of a solution $u\in L^2([0,T];H^2(\Omega))$ to \eqref{regularized-LS} can be found in \cite[Section 7.6]{pazy} (see also \cite{BS, cher} for variants of the problem).
We shall prove that the mapping ${\cal T}$ satisfies the conditions of the Leray-Schauder theorem (see Section 2). First, we show that $u$ is bounded in $L^2([0,T];H^2(\Omega))$ by a constant which depends only on the $L^2([0,T];H^1(\Omega))$-norm of $v$ (up to an additive constant). Indeed, by a standard regularization argument (see the proof of Theorem \ref{T-uniqueness} for details), we can multiply \eqref{regularized-LS} by $u$, integrate over $[0,T] \times\Omega$ and take into account integration by parts and boundary conditions from \eqref{regularized-LS} to get:
\begin{align*}
\int_0^T\!\!\!\int_\Omega \frac{\pa_t u^2}{2} d\mx dt   &+   \mu \int_0^T\!\!\!\int_\Omega |\Delta u|^2  d\mx dt \\
&=  \int_0^T\!\!\!\int_\Omega \big(-\nu\arctan(\Delta v_\eps)|\nabla v|\big)_\eps \Delta u d\mx dt +  \int_0^T\!\!\!\int_\Omega\lambda(u_0 - u)ud\mx dt. 
\end{align*}  From here, using the initial conditions (left-hand side) and the discrete Young inequality (right-hand side),  we get 
\begin{align} \label{estimate0}
\int_\Omega \frac{u^2(T,\cdot)}{2} d\mx  &- \int_\Omega \frac{u_0^2(\mx)}{2} d\mx + \mu \int_0^T\!\!\int_\Omega |\Delta u|^2  d\mx dt
\\ \nonumber
&\leq  \int_0^T\!\!\int_\Omega  \left(\frac{\nu^2}{2\mu}\big(\arctan(\Delta v_\eps)|\nabla v|\big)_\eps^2+ \frac{\mu}{2}|\Delta u|^2\right) d\mx dt\\
&+  2\lambda_0\int_0^T\!\!\int_{\Omega\setminus \omega} u_0^2 d\mx dt - \frac{\lambda_0}{2}\int_0^T\!\!\int_{\Omega\setminus \omega} u^2 d\mx dt.  \nonumber
\end{align} Next, we use Young's convolution inequality for the term containing the arctan 
\begin{align}
\int_\Omega \big(\arctan(\Delta v_\eps)|\nabla v|\big)_\eps^2 d\mx   &\leq \int_\Omega \big(\arctan(\Delta v_\eps)|\nabla v|\big)^2d\mx  \;\left(\int_{\R^2} |\rho_\eps| d\mx\right)^2 \label{estConv}
\\&=\int_\Omega \big(\arctan(\Delta v_\eps)|\nabla v|\big)^2d\mx, \nonumber
\end{align} where we took into account $\|\rho_\eps\|_{L^1(\R^2)}^2 = 1$.
Furthermore, we can neglect the last term on the right-hand side of \eqref{estimate0} (since it is negative) and use \eqref{estConv} to obtain
\begin{align} 
\label{estimate6}
\int_\Omega \frac{u^2(T,\cdot)}{2} d\mx &- \int_\Omega \frac{u_0^2(\mx)}{2} d\mx  + \frac{\mu}{2} \int_0^T\!\!\int_\Omega |\Delta u|^2  d\mx dt  - {2\lambda_0} T \int_{\Omega\setminus \omega} u_0^2 d\mx  \\  
&  \leq \int_0^T\!\!\int_\Omega \frac{\nu^2}{2\mu}\big(\arctan(\Delta v_\eps)|\nabla v|\big)^2d\mx dt \nonumber \\  
& \leq  \int_0^T\!\!\int_\Omega  \tilde{C}|\nabla v|^2 d\mx dt\leq \tilde{C} \|v\|^2_{L^2([0,T];H^1(\Omega))}, \nonumber 
\end{align} 
since $\arctan(\Delta v_\eps)$ can be bounded by a constant, that we have included in $\tilde{C}$. We have thus proved that $\|{\cal T}(v)= u\|_{L^2([0,T];H^2(\Omega))}$ is bounded by  $ \|v\|_{L^2([0,T];H^1(\Omega))}+\|u_0\|_{L^2(\Omega)}$ (multiplied by a constant).

Using the Aubin-Lions lemma (given in Section 2 for readers' convenience), this is enough to conclude that the mapping ${\cal T}$ is compact. Indeed, we use the following Banach spaces:
\begin{align*}
X_0= H^2(\Omega), \ \ X=H^1(\Omega), \ \
X_1= H^{-2}(\Omega):=(H^{2}(\Omega))'. 
\end{align*}Clearly, $X_0$, $X$, and $X_1$ satisfy the conditions of the Aubin-Lions lemma. We now consider the set 
\begin{equation}
\label{W}
\begin{split}
{\cal T}(L^2([0,T];H^1(\Omega)))={\cal W}=\{u\in  L^2([0,T];H^1(\Omega)): \, \text{$u$ solves \eqref{regularized-LS} for some $v\in L^2([0,T];H^1(\Omega))$}\}.
\end{split}
\end{equation} In order to prove that ${\cal T}$ is compact, it is enough to show that ${\cal W}$ is compact with respect to the topology induced from $L^2([0,T];H^1(\Omega))$. Since the function $u$ solves \eqref{regularized-LS}, we have (keep in mind that $X_1=H^{-2}(\Omega)$)
\begin{equation}
\label{H1t}
\begin{split}
&\|\pa_t u\|_{L^2([0,T];X_1)}=\|\Delta \left(\nu \big(-\arctan(\Delta v_\eps)|\nabla v|\big)_\eps-\mu^2 \Delta u \right) + \lambda(u_0-u) \|_{L^2([0,T];X_1)}\\
&\leq \nu \| \Delta \left(\arctan(\Delta v_\eps)|\nabla v|\right)\|_{L^2([0,T];X_1)}+\mu\| \Delta^2 u \|_{L^2([0,T];X_1)} +  
\lambda_0\|(u_0-u) \|_{L^2([0,T] \times \Omega)}
\\& \leq \nu \|\arctan(\Delta v_\eps)|\nabla v|\|_{L^2([0,T] \times \Omega)}+\mu\| \Delta u  \|_{L^2([0,T]\times \Omega)}\\&\qquad + \lambda_0(\|u_0 \|_{L^2([0,T] \times \Omega)} + \|u \|_{L^2([0,T] \times \Omega)}) \leq \tilde{C}(\mu, \nu, \lambda_0, \|u_0\|_{L^2(\Omega)}, \|v\|_{L^2([0,T];H^{1}(\Omega))}). 
\end{split}
\end{equation} 
Thus, we have seen that ${\cal W}$ satisfies conditions of the Aubin-Lions lemma i.e. ${\cal W}$ is compactly embedded in $L^2([0,T];H^1(\Omega))$. 

From here, we see that the mapping ${\cal T}$ maps a bounded subset of $L^2([0,T];H^1(\Omega))$ to a compact subset of $L^2([0,T];H^1(\Omega))$ i.e. ${\cal T}$ is {a compact mapping}. It is also not hard to see that it is continuous as well (since solutions to the linear bi-parabolic equations are stable with respect to the coefficients). To conclude the arguments from the Leray-Schauder fixed point theorem, we need to prove that the set  
$$
B= \{u\in L^2([0,T];H^1(\Omega)): \, u= \sigma {\cal T}(u) \text{ for some }0\leq \sigma \leq 1\},
$$ is bounded. To this end, we go back to \eqref{estimate6} and discover from there that if $u=\sigma {\cal T}(u)$, then
\begin{align*}
\int_\Omega \frac{u^2(T,\cdot)}{2} d\mx dt- \int_\Omega \frac{u_0^2(\mx)}{2} d\mx  + \frac{\mu}{2} \int_0^T\!\!\!\int_\Omega |\Delta u|^2  d\mx dt &- {2\lambda_0}\int_0^T\!\!\int_{\Omega\setminus \omega} u_0^2 d\mx dt \\
&\leq \sigma C(\mu, \nu, \lambda_0) \|u\|^2_{L^2([0,T];H^1(\Omega))}.
\end{align*}
From the standard interpolation inequality (see e.g. \cite{GT}), we can find a constant $\tilde{C}(\mu, \nu, \lambda_0)$ such that (keep in mind that $0\leq \sigma \leq 1$)
\begin{align}
\label{estimate8}
\int_\Omega \frac{u^2(T,\cdot)}{2} d\mx dt - \int_\Omega \frac{u_0^2(\mx)}{2} d\mx &+  \frac{\mu}{2} \int_0^T\!\!\!\int_\Omega |\Delta u|^2  d\mx dt - 2{\lambda_0}\int_0^T\!\!\int_{\Omega\setminus \omega} u_0^2 d\mx dt \\
&\leq C(\mu, \nu, \lambda_0) \|u\|^2_{L^2([0,T];H^1(\Omega))} \nonumber \\
& \leq \tilde{C}(\mu, \nu, \lambda_0) \|u\|^2_{L^2([0,T]\times \Omega)}+\frac{\mu}{4} \int_0^T\!\!\!\int_\Omega |\Delta u|^2  d\mx dt. \nonumber
\end{align} From here, we get
\begin{equation*}
\int_\Omega \frac{u^2(T,\cdot)}{2} d\mx dt-(1+4\lambda_0)\int_\Omega \frac{u_0^2(\mx)}{2} d\mx+\frac{\mu}{4} \int_0^T\!\!\!\int_\Omega |\Delta u|^2  d\mx dt \leq \tilde{C}(\mu, \nu, \lambda_0) \|u\|^2_{L^2([0,T]\times \Omega)}.
\end{equation*} The Gronwall inequality implies
\begin{equation}
\label{bnd-app-sol}
\|u\|^2_{L^2([0,T]\times \Omega)}+\frac{\mu}{4} \|\Delta u\|_{L^2([0,T]\times \Omega)}^2   \leq (1+4\lambda_0) e^{\tilde{C}(\mu, \nu, \lambda_0)T}\int_\Omega \frac{u_0^2(\mx)}{2} d\mx, 
\end{equation} 
which together with \eqref{estimate8} and the interpolation inequality provides for constants $c, C>0$
\begin{equation*}
\| u\|_{L^2([0,T];H^1(\Omega))} \leq C\left(\int_0^T\!\! \int_\Omega u^2 d\mx dt +\int_0^T\!\!\int_\Omega |\Delta u|^2  d\mx dt\right) \leq c<\infty.
\end{equation*}  This proves boundedness of the set $B$. 
We have thus proved that all the conditions of the Leray-Schauder fixed point theorem are fulfilled, which implies the existence of the fixed point $u$ of the mapping ${\cal T}$. The fixed point $u$ is the solution to \eqref{regularized}, \eqref{ic-bc}. 
\end{proof} 

Next, we aim to let the regularization parameter $\eps$ tend to 0. We have the following theorem.

\begin{theorem}
\label{existence}
The family $(u(\,\cdot\,;\eps))$ converges as $\eps\to 0$ strongly in $L^2([0,T];H^1(\Omega))$ along a susbequence toward $u\in L^2([0,T];H^2(\Omega))$. 
\end{theorem}
\begin{proof}
Note that by \eqref{H1t} and \eqref{bnd-app-sol} we see that $(u(\,\cdot\,;\eps))$ is bounded in $L^2([0,T];H^2(\Omega))\cap H^1([0,T];H^{-2}(\Omega))$ independently of $\eps$. Using the Aubin-Lions lemma as in the proof of Theorem \ref{T1}, from here it follows that $(u(\,\cdot\,;\eps))$ is strongly precompact in $L^2([0,T];H^1(\Omega))$. Thus, there exists a zero--subsequence $(\eps_n)$ such that 
\begin{equation}
\label{H1conv}
u(\,\cdot \,;\eps_n)\to u \ \ {\rm in} \ \   L^2([0,T];H^1(\Omega)) \ \  {\rm as}  \ \ n\to \infty.
\end{equation} Note that we have in particular
\begin{equation*}
|\nabla u(\,\cdot \,;\eps_n)| \to |\nabla u| \ \ {\rm in} \ \ L^2([0,T]\times \Omega) \ \ {\rm as} \ \ n\to \infty.
\end{equation*} 
The fact $u\in L^2([0,T];H^2(\Omega))$ follows from \eqref{bnd-app-sol} (since the family $(u(\,\cdot\,;\eps))$ is bounded in $L^2([0,T];H^2(\Omega))$).
\end{proof}

A useful property of the function $u$ given in the previous theorem is the boundedness of $\pa_t u$ as a functional from the space 
\begin{equation}
\label{W2}
W_2=\{ g\in L^2([0,T];H^2(\Omega)): \, g(t,\mx) = \frac{\pa g}{\pa \vec{n}}(t,\mx) = 0 \text{ for } (t,\mx) \in [0,T]\times {\pa \Omega}\},
\end{equation} with the norm induced by $L^2([0,T];H^2(\Omega))$. More precisely, we have the following 
\begin{corollary}
\label{cor-1}
Let $u$ be the function given in \eqref{H1conv} and denote by $u^\epsilon$ its convolution with respect to $t\geq 0$:

$$
u^\epsilon(t,\mx)=u(\cdot,\mx)\star \frac{1}{\epsilon}\rho(t/\epsilon), \; \; \; \; \; \rm{supp}(\rho) \subset (-1,1). 
$$
The family $(u^\epsilon)$ satisfies for any $\phi\in W_2$
\begin{equation}
\label{t-der}
\big| \langle \pa_t u^\epsilon, \phi \rangle \big| \leq c \|\phi\|_{L^2([0,T];H^2(\Omega))}.
\end{equation} for a constant $c$ independent of $\eps$.
\end{corollary}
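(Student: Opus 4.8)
The plan is to read off a distributional identity for $\pa_t u$ directly from the weak formulation \eqref{def-sol}, to observe that each term on its right-hand side is controlled in the dual space $L^2([0,T];W_2')$ by quantities already at our disposal, and then to note that convolving in time with a mollifier of unit mass cannot enlarge that dual norm --- which is what will make the bound uniform in $\eps$.

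First I would test \eqref{def-sol} with an arbitrary $\varphi\in C_c^\infty((0,T)\times\Omega)$, which is an admissible test function (it lies in $W\cap Lip([0,T];H^2(\Omega))$ and $\varphi(T,\cdot)=\varphi(0,\cdot)=0$), so that the $u_0$-term drops and the left side equals $-\langle\pa_t u,\varphi\rangle$; this identifies, in $\mathcal D'((0,T)\times\Omega)$,
\[
\pa_t u=\Delta\!\Big(-\nu\Big(\!\int_{\R}\arctan(\xi)\,d\pi_{(t,x)}(\xi)\Big)|\nabla u|-\mu\Delta u\Big)+\lambda(u_0-u).
\]
Since $u\in L^2([0,T];H^2(\Omega))$ by Definition \ref{sol-young} and Theorem \ref{existence}, we have $\nabla u,\Delta u\in L^2([0,T]\times\Omega)$, while \eqref{young-meas-bnd} bounds $\int_{\R}\arctan(\xi)\,d\pi_{(t,x)}(\xi)$ by $c(\pi)$ in $L^\infty([0,T]\times\Omega)$; hence the bracketed field and $\lambda(u_0-u)$ both lie in $L^2([0,T]\times\Omega)$. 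Because \eqref{def-sol} is written directly in terms of $\Delta\varphi$ --- so no integration by parts in $\mx$ is needed, and the conditions $\phi=\nabla_\mx\phi\cdot\vec{n}=0$ on $\pa\Omega$ are precisely what renders the pairing admissible --- Cauchy--Schwarz gives, for every $\phi\in W_2$,
\[
|\langle\pa_t u,\phi\rangle|\le\big(\nu\,c(\pi)\|\nabla u\|_{L^2}+\mu\|\Delta u\|_{L^2}\big)\|\Delta\phi\|_{L^2}+\lambda_0\big(\|u_0\|_{L^2}+\|u\|_{L^2}\big)\|\phi\|_{L^2}\le c\,\|\phi\|_{W_2},
\]
i.e. $\pa_t u\in L^2([0,T];W_2')$ with $c$ depending only on $\mu,\nu,\lambda_0,\pi,u_0$ and the $L^2([0,T];H^2(\Omega))$-norm of $u$.

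To pass to $u^\eps$, I would use that time-convolution commutes with $\pa_t$, so $\pa_t u^\eps=(\pa_t u)\star_t\tfrac1\eps\rho(\cdot/\eps)$ and hence $\langle\pa_t u^\eps,\phi\rangle=\big\langle\pa_t u,\ \phi\star_t\tfrac1\eps\rho(-\cdot/\eps)\big\rangle$ for $\phi\in W_2$; by the vector-valued Young convolution inequality in $L^2([0,T];H^2(\Omega))$ (with $\|\tfrac1\eps\rho(\cdot/\eps)\|_{L^1(\R)}=1$) the reflected-and-mollified test function has $W_2$-norm no larger than $\|\phi\|_{W_2}$, and combining with the previous step yields \eqref{t-der} with a constant independent of $\eps$. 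The one point requiring genuine care is the behaviour at the temporal endpoints $t=0,T$: one must fix an extension of $u$ outside $[0,T]$ compatible with the initial data \eqref{ic} so that the identity $\pa_t u^\eps=(\pa_t u)\star_t\rho_\eps$ and the adjoint relation above hold with no spurious boundary-in-time contributions. Apart from that bookkeeping, every estimate is an immediate consequence of \eqref{young-meas-bnd}, the regularity $u\in L^2([0,T];H^2(\Omega))$, and Young's inequality.
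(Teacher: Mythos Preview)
Your proposal is correct and uses the same ingredients as the paper --- the weak formulation \eqref{def-sol}, the $L^\infty$ bound \eqref{young-meas-bnd} on the Young-measure term, Cauchy--Schwarz, and Young's convolution inequality --- but organizes them slightly differently. You first establish $\pa_t u\in L^2([0,T];W_2')$ and then transfer the bound to $u^\eps$ via $\pa_t u^\eps=(\pa_t u)\star_t\rho_\eps$ and the adjoint relation; the paper instead inserts the mollifier directly as the time-factor of the test function, taking $\varphi(\tau,\mx)=\tfrac1\eps\rho((t-\tau)/\eps)\,\phi(t,\mx)$ in \eqref{def-sol} and then integrating in $t$, so that the convolved right-hand side appears in one step. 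The paper's route is marginally cleaner precisely at the point you flagged: by working inside \eqref{def-sol} it sidesteps the need to extend $u$ outside $[0,T]$ and to justify the commutation $\pa_t(u\star\rho_\eps)=(\pa_t u)\star\rho_\eps$ near the endpoints, since the initial term $\int_\Omega u_0\,\varphi(0,\cdot)\,d\mx$ in \eqref{def-sol} absorbs the $\tau=0$ contribution automatically. Conversely, your two-step argument makes explicit the intermediate fact that $\pa_t u$ itself lies in the dual of $W_2$, which is conceptually useful. Either way the constant is the same and depends only on $c(\pi)$, $\mu$, $\nu$, $\lambda_0$, and the $L^2([0,T];H^2(\Omega))$-norms of $u$ and $u_0$.
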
 
\begin{proof} First note that since $(\arctan(\Delta u(\,\cdot\,;\eps)_\eps))$ is a family of bounded functions, there exists its subsequence (not relabled) such that

\begin{equation}
    \label{Linf}
    \arctan(\Delta u(\,\cdot\,;\eps)_\eps) \overset{\star}{\rightharpoonup} U \ \ {\rm as} \ \ \eps\to 0
\end{equation} weak-$\star$ in $L^\infty([0,T]\times \Omega)$ for a function $U\in L^\infty([0,T]\times \Omega)$. We note that $|U|$ is bounded by $\pi/2$ since $\arctan$ is bounded by $\pi/2$. Thus, by Theorem \ref{existence},  we have in a weak sense (with test functions from $W_2$; see \eqref{W2}) by letting $\eps\to 0$ in \eqref{regularized} along a subsequence 
\begin{equation}
\label{regularized-limit}
    \begin{split}
        \pa_t u &=\Delta \Big(-\nu \,U(t,\mx) |\nabla u| - \mu \Delta u \Big) + \lambda(u_0 - u),
    \end{split}
\end{equation} where we used \eqref{Linf} to handle the nonlinear term in \eqref{regularized}.

We fix $\tau >\epsilon$ and test \eqref{regularized-limit} against $\frac{1}{\epsilon}\rho((\tau - t)/\epsilon) \phi(\tau,\mx)$, $\phi \in W_2$, to get:
\begin{equation*}\begin{split}
    &\int_{\Omega}\pa_\tau\Big(\int_0^T u(t,\mx) \frac{1}{\epsilon}\rho((\tau - t)/\epsilon) dt\Big)\,  \phi(\tau,\mx)d\mx\\
    &=  \int_0^T \int_{\Omega} \Big(-\nu \,U(t,\mx) |\nabla u| - \mu \Delta u \Big) \frac{1}{\epsilon}\rho((\tau - t)/\epsilon) \Delta\phi(\tau, \mx)d\mx dt \\ 
    &+  \int_0^T \int_{\Omega} \lambda(u_0 - u)\frac{1}{\epsilon}\rho((\tau - t)/\epsilon) \phi(\tau, \mx)d\mx dt.
    \end{split}
\end{equation*}
Next, after integrating the latter over $\tau \in[0,T]$ and denoting by $F^\eps$ the convolution of $F$ with respect to $t$, we obtain
\begin{align*}
\big| \langle \pa_t u^\epsilon, \phi \rangle \big| &\leq \Big|\int_0^T\int_\Omega \left(\nu \,U(\tau,\mx) \,  |\nabla u|    + \mu \Delta u \right)^\epsilon \, \Delta \phi \, d\mx d\tau\Big|  + \Big|\int_0^T\int_\Omega \lambda(u_0 - u)^\epsilon \, \phi \, d\mx d\tau\Big|.
\end{align*} Here, we use the Cauchy-Schwarz inequality, Young inequality for convolution, and $|U|\leq \pi/2 $ to conclude
\begin{align*}
\big| \langle \pa_t u^\epsilon, \phi \rangle \big| &\leq \Big( c \|\nabla u\|_{L^2([0,T]\times \Omega)}    + \mu \|\Delta u\|_{L^2([0,T]\times \Omega))}\Big) \, \|\Delta \phi \|_{L^2([0,T]\times \Omega)}  \\& + \|\lambda(u_0 - u)\|_{L^2([0,T]\times \Omega)} \, \|\phi \|_{L^2([0,T]\times \Omega)} \leq C \|\phi\|_{{L^2([0,T];H^2(\Omega))}},
\end{align*} for a constant $C$ depending on  $\nu$, $\lambda_0$, and $L^2$-norms of $\Delta u$, $u$, and $u_0$. \end{proof}


%

We shall conclude the section by stating another property of the function $u$ given in Theorem \ref{existence} -- the existence of weak initial traces of $u$. We note that the next theorem is actually more general and states the weak trace property for any $u$ satisfying \eqref{regularized-limit}, provided that $U$ is bounded.

\begin{theorem}
\label{t-weak-traces} The initial condition $u_0$ is the weak trace of the function $u$ satisfying \eqref{regularized-limit} in the sense that for every $\phi \in W_2$ (see \eqref{W2}), it holds
\begin{equation}
\label{weak-trace}
{\rm ess}\!\lim\limits_{\!t\to 0} \int_{\Omega} u(t,\mx) \phi(\mx) d\mx = \int_{\Omega} u_0(\mx) \phi(\mx) d\mx.
\end{equation}
\end{theorem}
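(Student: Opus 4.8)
The plan is to extract the weak initial trace directly from the weak formulation \eqref{def-sol} together with the bound \eqref{t-der} from Corollary \ref{cor-1}. First I would fix $\phi\in H^2(\Omega)$; since $C^\infty$ functions with compact support in $\Omega$ are dense in $H^2(\Omega)$ and the estimates below are all continuous in $\phi$ with respect to the $H^2$-norm, it suffices to argue for $\phi\in W_2$ (or even for $\phi$ vanishing near $\partial\Omega$) and pass to the limit. Define $m(t):=\int_\Omega u(t,\mx)\phi(\mx)\,d\mx$, which is an $L^2(0,T)$ function of $t$ because $u\in L^2([0,T];H^2(\Omega))$. The key point is that \eqref{t-der} shows $\pa_t u^\eps$ is bounded in $L^2([0,T];W_2')$ uniformly in $\eps$; combined with $u^\eps\to u$ in $L^2([0,T];H^2(\Omega))$, this gives $\pa_t u\in L^2([0,T];W_2')$, hence $m\in H^1(0,T)$ after testing against the fixed $\phi\in W_2$. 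In particular $m$ has a genuine (not merely essential) limit as $t\to 0^+$, so the essential limit in \eqref{weak-trace} is well-defined; it remains only to identify it with $\int_\Omega u_0\phi\,d\mx$.

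To identify the limit, I would return to \eqref{def-sol} and choose a test function of the form $\varphi(t,\mx)=\theta(t)\phi(\mx)$ with $\phi\in W_2\cap\{\phi=0\text{ near }\pa\Omega\}$ and $\theta\in C^1([0,T])$ satisfying $\theta(T)=0$ and $\theta(0)=1$; more precisely I would take $\theta=\theta_\delta$ a family with $\theta_\delta(0)=1$, $\theta_\delta$ supported in $[0,\delta]$, and $\theta_\delta$ an approximation that lets me localize near $t=0$. Plugging this into \eqref{def-sol}, the left-hand side becomes $\int_0^T m(t)\theta_\delta'(t)\,dt+\int_\Omega u_0\phi\,d\mx$, while the right-hand side is bounded by $C\delta$ because it is an integral over $[0,\delta]$ of $L^1_t$ functions (using \eqref{young-meas-bnd}, $u\in L^2([0,T];H^2(\Omega))$, the Cauchy–Schwarz inequality, and absolute continuity of the Lebesgue integral). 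Since $m\in H^1(0,T)$, integrating $\int_0^\delta m\theta_\delta'\,dt$ by parts gives $-m(0^+)-\int_0^\delta m'\theta_\delta\,dt=-m(0^+)+o_\delta(1)$. Letting $\delta\to 0$ yields $-m(0^+)+\int_\Omega u_0\phi\,d\mx=0$, i.e. $m(0^+)=\int_\Omega u_0\phi\,d\mx$, which is exactly \eqref{weak-trace} for this class of $\phi$.

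Finally I would remove the restriction $\phi=0$ near $\pa\Omega$ by density: for general $\phi\in H^2(\Omega)$ pick $\phi_k\to\phi$ in $H^2(\Omega)$ with $\phi_k$ vanishing near $\pa\Omega$, use that $\big|\int_\Omega u(t,\cdot)(\phi-\phi_k)\,d\mx\big|\le\|u(t,\cdot)\|_{L^2(\Omega)}\|\phi-\phi_k\|_{L^2(\Omega)}$ is small uniformly for $t$ in a set of full measure (since $\|u(t,\cdot)\|_{L^2(\Omega)}$ is controlled for a.e.\ $t$ and in fact near $t=0$ via \eqref{bnd-app-sol}/\eqref{H1conv}), and pass to the limit in \eqref{weak-trace} written for $\phi_k$. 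The main obstacle I anticipate is the interplay between "essential limit" and honest pointwise limit: one must be careful that the bound \eqref{t-der} is genuinely uniform in $\eps$ and survives the passage $u^\eps\to u$, so that $t\mapsto\int_\Omega u(t,\mx)\phi(\mx)\,d\mx$ admits a representative in $C([0,T])$ (or at least $H^1$) rather than merely $L^\infty$; once that regularity in $t$ is secured, identifying the trace value is a routine integration-by-parts argument localized near $t=0$.
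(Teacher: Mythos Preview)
Your approach is correct but takes a longer route than the paper's. The paper avoids Corollary~\ref{cor-1} and the $H^1(0,T)$-regularity step entirely: it plugs the piecewise-linear cutoff
\[
\chi^\eps_{t_0}(t)=\begin{cases}1,&0\le t\le t_0-\eps,\\(t_0-t)/\eps,&t_0-\eps\le t\le t_0,\\0,&t\ge t_0,\end{cases}
\]
times $\phi(\mx)$ directly into \eqref{def-sol}. The time-derivative term then becomes $\frac1\eps\int_{t_0-\eps}^{t_0}\!\int_\Omega u\,\phi\,d\mx\,dt$, which converges to $\int_\Omega u(t_0,\mx)\phi(\mx)\,d\mx$ at every Lebesgue point $t_0$ of $m(t)=\int_\Omega u(t,\cdot)\phi\,d\mx$ as $\eps\to 0$; the right-hand side becomes an integral over $[0,t_0]$ of $L^1_t$-integrands, and sending $t_0\to 0$ along the full-measure set of Lebesgue points finishes. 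Your detour through $\pa_t u\in L^2([0,T];W_2')$ and $m\in H^1(0,T)$ does buy slightly more---genuine continuity of $m$ on $[0,T]$ rather than only convergence along a full-measure set---but for the essential limit in \eqref{weak-trace} the Lebesgue-point argument is shorter and self-contained, requiring neither Corollary~\ref{cor-1} nor an integration by parts in $t$.

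One technical slip: your claim that $C_c^\infty(\Omega)$ is dense in $H^2(\Omega)$ is false---its closure is $H_0^2(\Omega)$, not $H^2(\Omega)$---so the density step as written does not reach all $\phi\in H^2(\Omega)$. (The paper's own proof is equally silent on this point; the weak formulation \eqref{def-sol} only admits test functions with the boundary behaviour built into $W$, so strictly speaking both arguments establish \eqref{weak-trace} for $\phi$ satisfying those boundary conditions.)
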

\begin{remark}
    We note that \eqref{weak-trace} holds also for $\phi\in L^2([0,T]\times \Omega)$ since $C^\infty_c([0,T]\times \Omega) \subset W_2$ is dense in $L^2([0,T]\times \Omega)$.
\end{remark}
\begin{proof}
We first test \eqref{regularized-limit} with respect to the function of the form
$$
\varphi(t,\mx)=\phi(\mx) \chi^\eps_{t_0}(t) 
$$ where $\phi \in W_2$
$$
\chi^\eps_{t_0}(t)=
\begin{cases}
0, & t \geq t_0\\
\frac{t_0-t}{\eps}, & t_0-\eps \leq t \leq t_0\\
1, & else
\end{cases}.
$$After taking into account that $\chi^\eps_{t_0}$ is supported in $[0,t_0]$, we have
\begin{align*}
&\frac{1}{\eps}\int_{t_0-\eps}^{t_0}\int_\Omega u\, \phi \, d\mx dt  -\int_\Omega u_0(\mx)\, \phi(\mx) d\mx \\& = -\int_0^{t_0}\int_\Omega \left(\nu \, U(t,\mx) \,  |\nabla u|    + \mu \Delta u \right)\chi^\eps_{t_0} \, \Delta \phi \, d\mx dt  + \int_0^{t_0}\!\int_\Omega \lambda(u_0 - u) \, \chi^\eps_{t_0} \, \phi \, d\mx dt.
\nonumber
\end{align*} Next, assume that $t_0\in E$ where $E$ is the set of Lebesgue points of the function $t \mapsto \int_\Omega u(t,\mx) \phi(\mx) \, d\mx$ and let $\eps\to 0$ in the latter expression. We note that the set $E$ is of the full measure. We get after taking into account $\chi^\eps_{t_0}(t)\overset{\eps\to 0}{\longrightarrow} {\bf 1}_{[0,t_0]}$ almost everywhere (where ${\bf 1}_{[0,t_0]}$ is the characteristic function of the set $[0,t_0]$) 
\begin{align*}
&\int_\Omega u(t_0,\mx) \phi(\mx) \, d\mx dt  -\int_\Omega u_0(\mx)\, \phi(\mx) d\mx \\& = -\int_0^{t_0}\int_\Omega \left(\nu \,U(t,\mx) \,  |\nabla u|    + \mu \Delta u \right) \, \Delta \phi \, d\mx dt  + \int_0^{t_0}\!\int_\Omega \lambda(u_0 - u) \,  \, \phi \, d\mx dt
\nonumber
\end{align*} To conclude \eqref{weak-trace}, it is now left to let $t_0\to 0$ along the above defined (full) set of the Lebesgue points $E$, and to take into account $u\in L^2([0,T];H^2(\Omega))$ and boundedness of the function $U$. The proof is concluded. \end{proof}

\subsection{Entropy solution concept and uniqueness}

As we have seen in the previous section, the sequence of approximate solutions $(u(\,\cdot\,;\eps))$ converges strongly in $L^2([0,T];H^1(\Omega))$ towards a function $u\in L^2([0,T];H^2(\Omega))$. By letting $\eps\to 0$ in \eqref{regularized} along an appropriate subsequence, we see that $u$ satisfies \eqref{regularized-limit}, but it is not clear which form takes the function $U$. Lack of this information prevents us from proving the uniqueness of the function $u$. We note that more on the form of $U$ can be said using the Young measures framework (see Subsection 3.2), but it does not substantially affect the well-posedness result.  

Therefore, we are going to look for the properties of equation \eqref{regularized} which allow us to avoid the nonlinear term $\arctan(\Delta u_\eps(\,\cdot\,;\eps))|\nabla u|$. We will end up with an inequality that is usually called entropy admissibility conditions (see \cite{Kru} where such an approach was essentially initiated). To motivate them, we start by testing the regularized equation by $u-\varphi$ where $\varphi \in C^\infty([0,T]\times \Omega)$ is such that it satisfies \eqref{ic} and \eqref{bc}:
\begin{equation}
\label{cond-fi}
\varphi(t,\mx)=\frac{\pa \varphi(t,\mx) }{\pa  \vec{n}}|_{\pa \Omega}=0 \ \ {\rm for} \ \ (t,\mx)\in [0,T]\times \pa \Omega.
\end{equation} We have for every $t\in[0,T]$ (employing the approximation argument as in Corollary \ref{cor-1} if necessary)
\begin{equation} \label{solu}
\begin{split}
&\int_0^t \int_{\Omega}\left( u-\varphi \right) \pa_t u \, d\mx dt'  \\&= -\int_0^t\int_\Omega  \Big(\nu \arctan(\Delta u_\eps)|\nabla u|\, \Delta(u-\varphi)_\eps+ \mu \Delta u \, \Delta(u-\varphi) \Big)\,d\mx dt'  \\& \ \ \;+ \int_0^t\int_\Omega \lambda(u_0 - u)(u-\varphi)d\mx dt'.
\end{split}
\end{equation} 
Next, we add and subtract $\arctan(\Delta \varphi_\eps)$ in the first summand on the right-hand side and $\Delta \varphi$ in the second summand on the right-hand side, and use the fact that $\arctan$ is an increasing function. We have

\begin{align} \label{solu-1}
&\int_0^t \int_{\Omega}\frac{1}{2}\pa_t\left( u-\varphi \right)^2   d\mx dt'+\int_0^t \int_{\Omega}(u-\varphi)\pa_t \varphi d\mx dt'  \\&= -\int_0^t\int_\Omega  \Big(\nu (\arctan(\Delta u_\eps)-\arctan(\Delta \varphi_\eps))|\nabla u| \Delta(u-\varphi)_\eps \nonumber\\&\qquad\qquad\qquad +\nu \arctan(\Delta \varphi_\eps)|\nabla u| \Delta(u-\varphi)_\eps +\mu \Delta (u-\varphi) \Delta(u-\varphi) +\mu \Delta \varphi \Delta(u-\varphi)  \Big)d\mx dt' \nonumber \\&\quad + \int_0^t\int_\Omega \lambda(u_0 - u)(u-\varphi)d\mx dt'
\nonumber \\&\leq -\int_0^t\int_\Omega  \Big( \nu \arctan(\Delta \varphi_\eps)|\nabla u|\, \Delta(u-\varphi)_\eps +\mu \Delta \varphi \Delta(u-\varphi) \Big)d\mx dt' \nonumber \\&\quad + \int_0^t\int_\Omega \lambda(u_0 - u)(u-\varphi)d\mx dt'.
\nonumber 
\end{align} Now, we want to let $\eps\to 0$. We recall that along a (non-relabeled) subsequence we have (keep in mind that in \eqref{solu-1} we have $u:=u(\,\cdot\,;\eps)$ and the function $u$ below is from Theorem \ref{existence} i.e. it is independent of $\eps$)
\begin{equation}
\label{conv}
\begin{split}
u(\,\cdot\,;\eps) \to u \ \ \text{strongly in} \ \ L^2([0,T]\times \Omega);\\
\nabla_{\mx} u(\,\cdot\,;\eps) \to \nabla_{\mx} u \ \ \text{strongly in} \ \ L^2([0,T]\times \Omega);\\
\Delta u(\,\cdot\,;\eps) \rightharpoonup \Delta u \ \ \text{weakly in} \ \ L^2([0,T]\times \Omega);\\
\varphi_\eps \to \varphi \ \ \text{strongly in} \ \ C([0,T]\times \Omega).
\end{split}
\end{equation} Thus, letting $\eps\to 0$ in \eqref{solu-1} along the subsequence chosen in Theorem \ref{existence}, we reach to the following entropy solutions concept.

\begin{definition}
\label{def-admissibility} We say that the function $u\in L^2([0,T];H^2(\Omega))$ represents an entropy solution to problem \eqref{che}, \eqref{ic}, \eqref{bc} if 
 the following inequality holds for almost every $t\in [0,T]$ and every $\varphi \in Lip([0,T];H^2(\Omega))$ satisfying \eqref{cond-fi}:
\begin{equation} \label{solu-2}
\begin{split}
& \int_{\Omega}\frac{1}{2}( u(t,\mx)-\varphi(t,\mx))^2   d\mx -\int_{\Omega}\frac{1}{2}( u_0(\mx)-\varphi(0,\mx))^2   d\mx  + \int_0^t \!\!\int_{\Omega}(u-\varphi)\pa_t \varphi d\mx dt' \\& \leq -\int_0^t\!\!\int_\Omega  \Big(\nu \arctan(\Delta \varphi)|\nabla u| \Delta(u-\varphi)  +\mu \Delta \varphi \Delta(u-\varphi) \Big)d\mx dt'  \\
&+ \int_0^t\int_\Omega \lambda(u_0 - u)(u-\varphi)d\mx dt'. 
\end{split}
\end{equation}

\end{definition}

We note that a standard density argument enables us to take $\varphi \in Lip([0,T];H^2(\Omega))$ instead of $\varphi \in C^\infty([0,T]\times \Omega)$.

Although given in the form of an inequality (see Subsection 3.2 for further comments), the previous definition provides stability of solutions and essentially singles out solutions obtained by the limiting procedure from the proof of Theorem \ref{existence}. Before we prove the well-posedness of entropy solutions to \eqref{che}, \eqref{ic}, \eqref{bc}, we need to prove that such solutions admit strong traces at $t=0$.
\begin{theorem}
An entropy solution to \eqref{che}, \eqref{ic}, \eqref{bc} in the sense of Definition \ref{def-admissibility} admits the strong trace at $t=0$ in the sense
\begin{equation}
\label{strong-trace}
{\rm ess}\!\lim\limits_{t\to 0} \int_\Omega |u(t,\mx)-u_0(\mx)|^2 d\mx=0.
\end{equation}
\end{theorem}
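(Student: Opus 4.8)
The plan is to combine the weak trace identity of Theorem~\ref{t-weak-traces} with an a priori bound on $\|u(t,\cdot)\|_{L^2(\Omega)}$ near $t=0$ that we extract from the admissibility inequality \eqref{solu-2}, and then to invoke the elementary Hilbert space fact that weak convergence together with the correct inequality between norms forces strong convergence. First I would test \eqref{solu-2} with the choice $\varphi\equiv 0$, which trivially lies in $Lip([0,T];H^2(\Omega))$ and satisfies \eqref{cond-fi}. Since $\arctan(\Delta 0)=0$ and $\Delta 0=0$, the whole arctan/diffusion block on the right-hand side of \eqref{solu-2} vanishes and the inequality collapses, for a.e.\ $t\in[0,T]$, to
$$
\int_\Omega \frac{1}{2}\, u(t,\mx)^2\,d\mx \;\le\; \int_\Omega \frac{1}{2}\, u_0(\mx)^2\,d\mx \;+\; \int_0^t\!\!\int_\Omega \lambda\,(u_0-u)\,u\,d\mx\,dt'.
$$
Because $u\in L^2([0,T];H^2(\Omega))\subset L^2([0,T]\times\Omega)$ and $u_0\in L^\infty(\Omega)\subset L^2(\Omega)$, the integrand $\lambda(u_0-u)u$ is in $L^1([0,T]\times\Omega)$, so absolute continuity of the Lebesgue integral gives that the last term tends to $0$ as $t\to0^+$. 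Hence ${\rm ess}\!\limsup\limits_{t\to0}\int_\Omega u(t,\mx)^2\,d\mx \le \int_\Omega u_0(\mx)^2\,d\mx$.

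Second, I would promote the weak trace \eqref{weak-trace} to weak convergence in $L^2(\Omega)$. Choose a countable family $\{\phi_k\}\subset C_c^\infty(\Omega)$ dense in $L^2(\Omega)$ (each $\phi_k$ automatically satisfies the boundary requirements used in Theorem~\ref{t-weak-traces}); intersecting the corresponding full-measure sets of ``good'' times yields one full-measure set $E\subset(0,T)$ along which $\int_\Omega u(t,\mx)\phi_k(\mx)\,d\mx\to\int_\Omega u_0(\mx)\phi_k(\mx)\,d\mx$ for every $k$. Feeding in the $L^2$-in-space bound obtained in the first step together with a routine density argument, we get $u(t,\cdot)\rightharpoonup u_0$ weakly in $L^2(\Omega)$ as $t\to0$ along $E$; in particular, testing against $u_0\in L^2(\Omega)$ itself, $\int_\Omega u(t,\mx)\,u_0(\mx)\,d\mx\to\int_\Omega u_0(\mx)^2\,d\mx$.

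Finally I would expand
$$
\int_\Omega\big(u(t,\mx)-u_0(\mx)\big)^2 d\mx=\int_\Omega u(t,\mx)^2 d\mx-2\int_\Omega u(t,\mx)\,u_0(\mx)\,d\mx+\int_\Omega u_0(\mx)^2 d\mx,
$$
pass to the essential $\limsup$ as $t\to0$, and combine the two previous steps: the first term contributes at most $\int_\Omega u_0^2$, the cross term converges to $-2\int_\Omega u_0^2$, and the third term is constant, so the right-hand side is bounded by $\int_\Omega u_0^2-2\int_\Omega u_0^2+\int_\Omega u_0^2=0$. Since $\int_\Omega (u(t,\mx)-u_0(\mx))^2\,d\mx\ge 0$, this forces ${\rm ess}\!\lim\limits_{t\to0}\int_\Omega|u(t,\mx)-u_0(\mx)|^2\,d\mx=0$, i.e.\ \eqref{strong-trace}.

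The main obstacle is the interlocking of the first two steps rather than any single hard estimate: Theorem~\ref{t-weak-traces} on its own is too weak, as it only controls test functions in $H^2(\Omega)$ and says nothing about $\|u(t,\cdot)\|_{L^2(\Omega)}$, whereas the admissibility inequality is precisely what delivers the missing $L^2$-in-space bound near $t=0$ and thereby allows the weak convergence to be extended to all of $L^2(\Omega)$. A related point to watch is that the datum $u_0$ is only $L^\infty(\Omega)$ (a binary image), not $H^2(\Omega)$, so it cannot be inserted directly into Theorem~\ref{t-weak-traces}; this is exactly why the density argument against $C_c^\infty(\Omega)$ must precede the test against $u_0$. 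Everything else — admissibility of $\varphi\equiv 0$, the $L^1$-integrability of the fidelity term, and the ``weak $+$ norm control $\Rightarrow$ strong'' lemma in a Hilbert space — is routine.
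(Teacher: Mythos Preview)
Your proof is correct and takes a genuinely different route from the paper's. The paper argues directly: it inserts the test function $\varphi(t,\mx)=\chi^\eps_{t_0}(t)\,u_0(\mx)$ (with $\chi^\eps_{t_0}$ a Lipschitz cut-off as in the proof of Theorem~\ref{t-weak-traces}) into the admissibility inequality \eqref{solu-2}, lets $\eps\to 0$ and then $t_0\to 0$, and reads off \eqref{strong-trace} immediately from the resulting inequality. Your argument instead decouples the two ingredients---the $L^2$-norm control comes from \eqref{solu-2} with $\varphi\equiv 0$, the weak convergence from Theorem~\ref{t-weak-traces} upgraded by density---and then closes with the standard ``weak convergence plus $\limsup$ of norms $\le$ norm of limit implies strong convergence'' lemma. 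The paper's approach is shorter and does not need to invoke Theorem~\ref{t-weak-traces} as a separate input, but it implicitly requires $u_0\in H^2(\Omega)$ satisfying \eqref{cond-fi} so that $\chi^\eps_{t_0}\,u_0$ is an admissible test function in \eqref{solu-2}; your approach only needs $u_0\in L^2(\Omega)$, which is precisely the point you flag in your final paragraph and which is more in line with the binary-image setting of the paper.
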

\begin{proof}
We follow the proof of Theorem \ref{t-weak-traces} and, with the notation from there, we take (for an $\eps$ such that $t_0-\eps>0$) 
$$
\varphi(t,\mx)=\chi^\eps_{t_0}(t)u_0(\mx).
$$ We get from \eqref{solu-2} after taking $t=t_0-\eps$:
\begin{align*} 
& \int_{\Omega}\frac{1}{2}( u(t_0-\eps,\mx)-u_0(\mx))^2   d\mx  
\\& \leq -\int_0^{t_0-\eps}\!\!\int_\Omega  \Big(\nu \arctan(\chi^\eps_{t_0} \Delta u_0)|\nabla u| \Delta(u-\chi^\eps_{t_0}  u_0) \nonumber +\mu \chi^\eps_{t_0} \,\Delta u_0\, \Delta(u-\chi^\eps_{t_0}u_0) \Big)d\mx dt' \nonumber \\&+ \int_0^{t_0-\eps}\int_\Omega \lambda(u_0 - u)(u-\chi^\eps_{t_0} u_0)d\mx dt'. 
\nonumber
\end{align*} By letting $\eps\to 0$ along a subsequence defining the value of $ t\mapsto \int_{\Omega}\frac{1}{2}(u(t,\mx)-u_0(\mx))^2   d\mx $ in its Lebesgue point $t_0$ here (we always tacitly assuming that $t_0$ is the Lebesgue point of an appropriate function), we have
\begin{align*} 
& \int_{\Omega}\frac{1}{2}( u(t_0,\mx)-u_0(\mx))^2   d\mx   
\\& \leq -\int_0^{t_0}\!\!\int_\Omega  \Big(\nu \arctan(\Delta u_0)|\nabla u| \Delta(u- u_0) \nonumber +\mu  \,\Delta u_0\, \Delta(u-u_0) \Big)d\mx dt' \nonumber \\&+ \int_0^{t_0}\int_\Omega \lambda(u_0 - u)(u- u_0)d\mx dt'. 
\nonumber
\end{align*} Finally, letting here $t_0\to 0$, we reach to the conclusion of the theorem. \end{proof}

\begin{theorem}
\label{T-uniqueness}
There exists a unique entropy solution to \eqref{che}, \eqref{ic}, \eqref{bc} in the sense of Definition \ref{def-admissibility}.
\end{theorem}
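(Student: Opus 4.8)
Existence of an admissible solution is already at our disposal. Theorem~\ref{existence} produces a pair $(u,\pi_{(t,\mx)})$ obeying Definition~\ref{sol-young}; the computation running from \eqref{solu} to \eqref{solu-2} shows that exactly this limit solution satisfies the admissibility inequality \eqref{solu-2} (retaining in the right-hand side the dissipative term $-\mu|\Delta(u-\varphi)|^2$, which was dropped only for brevity in \eqref{solu-1} -- its retention in the $\eps\to0$ passage of Theorem~\ref{existence} being legitimate by weak lower semicontinuity of $\|\Delta u_\eps\|_{L^2}$); and the theorem immediately preceding this one provides the strong trace \eqref{strong-trace}. It therefore remains to prove uniqueness.

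For uniqueness I would run a Kruzhkov-type doubling argument in which the ``entropy parameters'' are the test functions $\varphi$ built into Definition~\ref{def-admissibility}. Let $u_1,u_2$ be two admissible solutions (each accompanied, as in Definition~\ref{def-admissibility}, by its own functional $\pi_{(t,\mx)}$) and set $w=u_1-u_2$. For $\delta>0$ let $u_j^\delta$ be the mollification of $u_j$ in the time variable; since $u_j\in L^2([0,T];H^2(\Omega))$ and the conditions \eqref{cond-fi} are linear, $u_j^\delta\in Lip([0,T];H^2(\Omega))$ and is admissible as a test function in \eqref{solu-2}. The plan is to put $\varphi=u_2^\delta$ in \eqref{solu-2} written for $u_1$, put $\varphi=u_1^\delta$ in \eqref{solu-2} written for $u_2$, add the two inequalities, and send $\delta\to0$. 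By Corollary~\ref{cor-1}, $\pa_t u_j$ is a bounded functional on $W_2$, i.e.\ $\pa_t u_j\in L^2([0,T];(H^2(\Omega))')$, whence (Lions--Magenes) $u_j\in C([0,T];L^2(\Omega))$; thus $u_j^\delta\to u_j$ in $L^2([0,T];H^2(\Omega))$, $u_j^\delta(0)\to u_0$ in $L^2(\Omega)$ by \eqref{strong-trace}, and $u_j^\delta(t)\to u_j(t)$ in $L^2(\Omega)$ for every $t$. Using \eqref{strong-trace} to kill the contributions at $t=0$ and the chain rule in the Gelfand triple $H^2(\Omega)\hookrightarrow L^2(\Omega)\hookrightarrow (H^2(\Omega))'$ together with $w(0)=0$, the left-hand side of the summed inequality reduces, in the limit, to $\tfrac12\|w(t)\|_{L^2}^2$ for a.e.\ $t$.

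On the right-hand side, combining the two copies, the biharmonic term yields $-\mu\int_0^t\|\Delta w\|_{L^2}^2\,dt'$ (this is where the dissipative term kept in the sharper form of \eqref{solu-2} is crucial), the fidelity term yields $-\int_0^t\!\int_\Omega\lambda\,w^2\,d\mx\,dt'\le0$, and the shock-filter term yields
\[
\nu\int_0^t\!\!\int_\Omega \Delta w\,\big(\arctan(\Delta u_1)\,|\nabla u_2|-\arctan(\Delta u_2)\,|\nabla u_1|\big)\,d\mx\,dt'.
\]
I would write the integrand as $\big(\arctan(\Delta u_1)-\arctan(\Delta u_2)\big)|\nabla u_2|+\arctan(\Delta u_2)\big(|\nabla u_2|-|\nabla u_1|\big)$. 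The second summand is harmless: $|\arctan|\le\tfrac{\pi}{2}$ and $\big||\nabla u_1|-|\nabla u_2|\big|\le|\nabla w|$ bound its contribution by $\tfrac{\nu\pi}{2}\int_0^t\!\int|\Delta w|\,|\nabla w|$, which by Young's inequality and the interpolation $\|\nabla w\|_{L^2}^2\le\epsilon\|\Delta w\|_{L^2}^2+C_\epsilon\|w\|_{L^2}^2$ is absorbed into the dissipative $-\mu\int\|\Delta w\|^2$ term and a multiple of $\int\|w\|^2$.

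The first summand is the crux, and the step I expect to be the main obstacle. Since $\arctan$ is increasing, $\Delta w\,\big(\arctan(\Delta u_1)-\arctan(\Delta u_2)\big)\ge0$, so this is a genuinely nonnegative term on the right-hand side: it cannot be discarded, and to estimate it while keeping enough smallness in $w$ one is forced to use $|\arctan(\Delta u_1)-\arctan(\Delta u_2)|\le|\Delta w|$ (or the interpolated $\le|\Delta w|^{\theta}\pi^{1-\theta}$, $\theta\in(0,1)$), which produces the delicate term $\nu\int_0^t\!\int_\Omega|\nabla u_2|\,|\Delta w|^2$. Controlling it requires more than the raw regularity $w,u_2\in L^2([0,T];H^2(\Omega))$: one must invoke the uniform a priori bounds of Theorem~\ref{T1}/\eqref{bnd-app-sol} -- which also give $u_j\in L^\infty([0,T];L^2(\Omega))$, hence by interpolation $\nabla u_j\in L^4([0,T];L^2(\Omega))$ -- the two-dimensional Gagliardo--Nirenberg and Ladyzhenskaya inequalities, the embeddings $H^2(\Omega)\hookrightarrow W^{1,p}(\Omega)$ for all $p<\infty$, and once again the boundedness of $\arctan$ to keep constants finite; identifying the right exponents and checking that the resulting weight in time is integrable is the technical heart of the proof. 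The target estimate is
\[
\nu\int_0^t\!\!\int_\Omega|\nabla u_2|\,|\Delta w|^2\,d\mx\,dt'\ \le\ \tfrac{\mu}{2}\int_0^t\|\Delta w\|_{L^2}^2\,dt'+\int_0^t g(t')\,\|w(t')\|_{L^2}^2\,dt',
\]
with $g\in L^1(0,T)$ depending only on the a priori norms of $u_2$. Granting this, the summed inequality becomes $\tfrac12\|w(t)\|_{L^2}^2\le\int_0^t\big(C+g(t')\big)\|w(t')\|_{L^2}^2\,dt'$ for a.e.\ $t$; Gronwall's inequality together with $w(0)=0$ forces $w\equiv0$, i.e.\ $u_1=u_2$. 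With the existence statement recalled above, this completes the proof.
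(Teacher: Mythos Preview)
Your overall architecture---mollify in time, insert each solution as the test function in the other's admissibility inequality, add, pass to the limit, and close by Gronwall---is exactly the paper's. But you have a sign error in the shock-filter term, and that error is what creates the ``crux'' you then struggle with.

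Taking \eqref{solu-2} as stated and testing $u_1$ with $\varphi=u_2$ and $u_2$ with $\varphi=u_1$, the sum of the two shock terms is
\[
\nu\int_0^t\!\!\int_\Omega \arctan(\Delta u_2)\,|\nabla u_1|\,\Delta(u_1-u_2)\;+\;\nu\int_0^t\!\!\int_\Omega \arctan(\Delta u_1)\,|\nabla u_2|\,\Delta(u_2-u_1),
\]
which equals
\[
\nu\int_0^t\!\!\int_\Omega \Delta w\,\big(\arctan(\Delta u_2)\,|\nabla u_1|-\arctan(\Delta u_1)\,|\nabla u_2|\big)\,d\mx\,dt',
\]
i.e.\ the \emph{negative} of what you wrote. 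With the correct sign, splitting by $\pm\arctan(\Delta u_1)|\nabla u_1|$ (this is the paper's choice) gives
\[
-\nu\int_0^t\!\!\int_\Omega\big(\arctan(\Delta u_1)-\arctan(\Delta u_2)\big)\,|\nabla u_1|\,\Delta w\,d\mx\,dt'
\;+\;\nu\int_0^t\!\!\int_\Omega\arctan(\Delta u_1)\big(|\nabla u_1|-|\nabla u_2|\big)\,\Delta(u_2-u_1)\,d\mx\,dt'.
\]
The first integrand is $\ge 0$ by monotonicity of $\arctan$, so the first term is $\le 0$ and is simply \emph{dropped}; this is precisely the payoff of the admissibility concept. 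Only the second term survives, and that one you already know how to handle: $|\arctan|\le\pi/2$, $\big||\nabla u_1|-|\nabla u_2|\big|\le|\nabla w|$, Young's inequality and the interpolation $\|\nabla w\|_{L^2}^2\le\epsilon\|\Delta w\|_{L^2}^2+C_\epsilon\|w\|_{L^2}^2$ absorb it into $-\mu\int\|\Delta w\|^2$ and a Gronwall term.

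Two further remarks. First, your proposed ``retention of the dissipative term $-\mu|\Delta(u-\varphi)|^2$'' is unnecessary: with \eqref{solu-2} as written, the sum $\mu\!\int\Delta u_2\,\Delta w+\mu\!\int\Delta u_1\,\Delta(-w)=-\mu\|\Delta w\|_{L^2}^2$ already produces the dissipation directly. Second, the estimate you aim for in your ``crux'' step,
\[
\nu\int_0^t\!\!\int_\Omega|\nabla u_2|\,|\Delta w|^2\,d\mx\,dt'\ \le\ \tfrac{\mu}{2}\int_0^t\|\Delta w\|_{L^2}^2\,dt'+\int_0^t g(t')\,\|w(t')\|_{L^2}^2\,dt',
\]
is not attainable with the available regularity: $|\nabla u_2|$ is only in $L^2_tL^p_x$ (any $p<\infty$) and $|\Delta w|^2$ only in $L^1_{t,x}$, with no room to interpolate back down to $\|w\|_{L^2}$. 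Fortunately, once the sign is corrected this term never arises.
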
 
\begin{proof}
As we have seen from \eqref{solu}--\eqref{conv}, the family of solution $(u(\cdot, \eps))$ to \eqref{regularized-LS} strongly converges along a subsequence as $\eps\to 0$ in $L^2([0,T];H^1(\Omega))$ toward an entropy solution $u(t,\mx)$ to \eqref{che}, \eqref{ic}, \eqref{bc} in the sense of Definition \ref{def-admissibility}.

Now, we want to prove the stability of entropy solutions to \eqref{che}, \eqref{ic}, \eqref{bc}. We take two entropy solutions to the latter problem say $u$ and $v$ with the initial data $u_0$ and $v_0$, respectively. We denote by $u^\epsilon$ and $v^\epsilon$ mollification of $u$ and $v$ with respect to $t$:
$$
u^\epsilon(t,\mx)=u(\cdot,\mx)\star \frac{1}{\epsilon}\rho(t/\epsilon), \ \ v^\epsilon(t,\mx)=v(\cdot,\mx)\star \frac{1}{\epsilon}\rho(t/\epsilon). 
$$ Now, write \eqref{solu-2} for $u$ and $v$ and write $v^\epsilon$ and $u^\epsilon$ instead of $\varphi$, respectively. We have
\begin{equation} 
\label{solu-2-u}
\begin{split}
& \int_{\Omega}\frac{1}{2}( u(t,\mx)-v^\epsilon(t,\mx))^2   d\mx -\int_{\Omega}\frac{1}{2}( u_0(\mx)-v^\epsilon(0,\mx))^2   d\mx  +\! \int_0^t \!\!\int_{\Omega}(u-v^\epsilon)\pa_t v^\epsilon d\mx dt' \\&  \leq \!-\int_0^t\!\!\int_\Omega \! \Big(\nu \arctan(\Delta v^\epsilon)|\nabla u| \, \Delta(u-v^\epsilon)  \!+\!\mu \Delta v^\epsilon \Delta(u-v^\epsilon) \Big)d\mx dt'  \\& \; + \int_0^t\int_\Omega \lambda\,(u_0 - u)(u-v^\epsilon)d\mx dt', 
\end{split}
\end{equation} and
\begin{align} \label{solu-2-v}
& \int_{\Omega}\frac{1}{2}( v(t,\mx)-u^\epsilon(t,\mx))^2   d\mx -\int_{\Omega}\frac{1}{2}( v_0(\mx)-u^\epsilon(0,\mx))^2   d\mx  + \!\int_0^t \!\!\int_{\Omega}(v-u^\epsilon)\pa_t u^\epsilon d\mx dt' \\& \leq\!-\int_0^t\!\!\int_\Omega \! \Big(\nu \arctan(\Delta u^\epsilon)|\nabla v|\, \Delta(v-u^\epsilon) \nonumber \!+\!\mu \Delta u^\epsilon \Delta(v-u^\epsilon) \Big)d\mx dt' \nonumber \\&+ \int_0^t\int_\Omega \lambda(v_0 - v)(v-u^\epsilon)d\mx dt'.
\nonumber
\end{align} Next, we add \eqref{solu-2-u} and \eqref{solu-2-v} and we aim to let $\epsilon\to 0$. We have
\begin{align} \label{solu-3-uv}
& \int_{\Omega}\frac{1}{2}\left( (u(t,\mx)-v^\epsilon(t,\mx))^2 +( v(t,\mx)-u^\epsilon(t,\mx))^2 \right) d\mx \\&
-\int_{\Omega}\frac{1}{2}( u_0(\mx)-v^\epsilon(0,\mx))^2   d\mx  -\int_{\Omega}\frac{1}{2}( v_0(\mx)-u^\epsilon(0,\mx))^2 d\mx \nonumber\\
&+ \int_0^t \!\!\int_{\Omega}(u-v^\epsilon)\pa_t v^\epsilon d\mx dt' + \int_0^t \!\!\int_{\Omega}(v-u^\epsilon)\pa_t u^\eps d\mx dt' \nonumber \\
&\leq -\int_0^t\!\!\int_\Omega  \nu \Big(\arctan(\Delta v^\epsilon)|\nabla u|\, \Delta(u-v^\epsilon)+\arctan(\Delta u^\epsilon)|\nabla v|\, \Delta(v-u^\epsilon) \Big)d\mx dt' \nonumber\\& 
-\mu \int_0^t\!\!\int_\Omega \left(\Delta v^\epsilon \Delta(u-v^\epsilon)+\Delta u^\epsilon \Delta(v-u^\epsilon) \right) d\mx dt' \nonumber \\&
+ \int_0^t\int_\Omega \lambda(u_0 - u)(u-v^\epsilon)\,d\mx dt'+\int_0^t\int_\Omega \lambda(v_0 - v)(v-u^\epsilon)\,d\mx dt', 
\nonumber
\end{align} Note that by \eqref{strong-trace}, we have in $L^2(\Omega)$ as $\epsilon\to 0$
\begin{equation}
\label{stron-trace-1}
v^\epsilon(0,\cdot) \to v_0 \ \ {\rm and} \ \ u^\epsilon(0,\cdot) \to u_0.
\end{equation} Let us consider the last two terms on the left-hand side of the previous expression. We have
\begin{equation}
\label{U1}
\begin{split}
&\int_0^t \!\!\int_{\Omega}(u-v^\epsilon)\pa_t v^\epsilon d\mx dt' + \int_0^t \!\!\int_{\Omega}(v-u^\epsilon)\pa_t u^\epsilon d\mx dt'
\\&=\int_0^t \!\!\int_{\Omega}(u^\epsilon-v^\epsilon)\pa_t v^\epsilon d\mx dt' + \int_0^t \!\!\int_{\Omega}(v^\epsilon-u^\epsilon)\pa_t u^\epsilon d\mx dt'
\\&+\int_0^t \!\!\int_{\Omega}(u-u^\epsilon)\pa_t v^\epsilon d\mx dt' + \int_0^t \!\!\int_{\Omega}(v-v^\epsilon)\pa_t u^\epsilon d\mx dt'
\\&=-\frac{1}{2}\int_0^t \!\!\int_{\Omega}\pa_t (v^\epsilon-u^\epsilon)^2 d\mx dt' +o_\epsilon(1),
\end{split}
\end{equation} where $o_\epsilon(1)$ is as before the Landau symbol denoting the function tending to zero as $\epsilon\to 0$. Indeed, by Corollary \ref{cor-1}, we have 
\begin{align*}
|o_\epsilon(1)|:&=\Big|\int_0^t \!\!\int_{\Omega}(u-u^\epsilon)\pa_t v^\epsilon d\mx dt' + \int_0^t \!\!\int_{\Omega}(v-v^\epsilon)\pa_t u^\epsilon d\mx dt'\Big|
\\&\leq c \left( \|u-u^\epsilon\|_{L^2([0,T];H^2(\Omega))}+\|v-v^\epsilon\|_{L^2([0,T];H^2(\Omega))} \right)\to 0 \ \ {\rm as} \ \ \epsilon\to 0
\end{align*} since $u^\epsilon$ and $v^\epsilon$ are merely convolutions of $L^2([0,T];H^2(\Omega))$ functions $u$ and $v$, respectively.
Taking this, \eqref{U1}, and \eqref{stron-trace-1} into account, we get after letting $\epsilon\to 0$ in \eqref{solu-3-uv}

\begin{align*} 
& \int_{\Omega}\frac{1}{2} (u(t,\mx)-v(t,\mx))^2  d\mx 
-\int_{\Omega}\frac{1}{2}( u_0(\mx)-v_0(\mx))^2   d\mx  \\
&\leq -\int_0^t\!\!\int_\Omega  \nu \Big(\arctan(\Delta v)|\nabla u|\, \Delta(u-v)+\arctan(\Delta u)|\nabla v|\, \Delta(v-u) \Big)d\mx dt' \nonumber\\& \ \  
-\mu \int_0^t\!\!\int_\Omega \left(\Delta v \, \Delta(u-v)+\Delta u \, \Delta(v-u) \right) d\mx dt' \nonumber \\& \ \ 
+ \int_0^t\int_\Omega \lambda(u_0 - u)(u-v)\,d\mx dt'+\int_0^t\int_\Omega \lambda(v_0 - v)(v-u)\,d\mx dt', 
\nonumber
\end{align*} Rearranging the $\arctan$ terms and using the fact that $\arctan$ is an increasing function similarly as in \eqref{solu-1}, we have from the latter equation 

\begin{align*} 
& \int_{\Omega}\frac{1}{2} (u(t,\mx)-v(t,\mx))^2  d\mx 
-\int_{\Omega}\frac{1}{2}( u_0(\mx)-v_0(\mx))^2   d\mx  \\
&\leq -\int_0^t\!\!\int_\Omega  \nu \big(\arctan(\Delta u)-\arctan(\Delta v) \big)|\nabla u|\, \Delta(u-v) d\mx dt' \nonumber\\&
 \ \ +\int_0^t\!\!\int_\Omega  \nu \arctan(\Delta u)(|\nabla u|-|\nabla v|)\, \Delta(v-u) \, d\mx dt'
\nonumber \\& \ \  
-\mu \int_0^t\!\!\int_\Omega \left(\Delta(u-v)\right)^2 d\mx dt'  
+ \int_0^t\int_\Omega \lambda(u_0 - u-v_0+u)(u-v)\,d\mx dt'
\\& \leq \int_0^t\!\!\int_\Omega  \nu \arctan(\Delta u)\,|\nabla (u- v)| \, \Delta(v-u) \, d\mx dt'
\nonumber \\& \ \  
-\mu \int_0^t\!\!\int_\Omega \left(\Delta(u-v)\right)^2 d\mx dt'  
+ \int_0^t\int_\Omega \lambda(u_0 - u-v_0+u)(u-v)\,d\mx dt'
\nonumber
\end{align*} From the above, we have upon using the Cauchy-Schwartz and discrete Young inequality 
\begin{align*} 
& \int_{\Omega}\frac{1}{2} (u(t,\mx)-v(t,\mx))^2  d\mx +\mu \int_0^t\!\!\int_\Omega \left(\Delta(u-v)\right)^2 d\mx dt'
\\& \leq \int_{\Omega}( u_0-v_0)^2\,d\mx+ C(\mu)\int_0^t\!\!\int_\Omega |\nabla (u- v)|^2 \, d\mx dt'+ 
\frac{\mu}{2}\int_0^t\!\!\int_\Omega \big| \Delta(v-u) \big|^2 \, d\mx dt'
\nonumber \\&   
+ \int_0^t\int_\Omega \lambda(u_0-v_0)^2\,d\mx dt +2\int_0^t\int_\Omega(u-v)^2\,d\mx dt'
\nonumber
\end{align*} for a constant $C(\mu)>0$. After simple rearrangements and by the interpolation inequality as in \eqref{estimate8} (applied to $\int_0^t\!\!\int_\Omega |\nabla (u- v)|^2 \, d\mx dt'$) this becomes
\begin{align*} 
& \int_{\Omega}\frac{1}{2} (u(t,\mx)-v(t,\mx))^2  d\mx +\frac{\mu}{4} \int_0^t\!\!\int_\Omega \left(\Delta(u-v)\right)^2 d\mx dt'
\\& \leq (1+\lambda_0 t)\int_{\Omega}( u_0-v_0)^2\,d\mx+ \tilde{C}(\mu)\int_0^t\!\!\int_\Omega ( u- v)^2 \, d\mx dt' 
\nonumber
\end{align*}for another constant $\tilde{C}(\mu)$. From the Gronwall inequality, analogically with \eqref{bnd-app-sol}, we see that it holds
$$
\int_{\Omega}\frac{1}{2} (u(t,\mx)-v(t,\mx))^2  d\mx \leq \bar{C} \int_{\Omega}( u_0-v_0)^2\,d\mx 
$$ again for a constant $\bar{C}$ depending on $\lambda_0$, $t$, and $\tilde{C}(\mu)$. This concludes the stability proof. \end{proof}

\subsection{Solution concept as a variational equality}
Following the internal logic behind entropy solutions \cite{Kru} or viscosity solutions \cite{Lio}, it is expected that if $u \in C^4([0,T] \times \Omega)$ satisfies \eqref{solu-2}, then it should satisfy \eqref{che-old} in the classical sense (the vice versa is clear). This seems not retrievable from \eqref{solu-2}, and it is a natural question what kind of variational equality $u$ satisfies.

At this moment, the most accurate answer can be obtained via the variant of Young measures given in \cite{DiPM}. To recall them, we denote by ${\cal M}([0,T] \times \Omega)$ the space of Radon measures on $[0,T] \times \Omega$ that have a finite total mass and by ${\cal M}^+(\mathbb{R})$ the space of non-negative Radon measures on $\mathbb{R}$. We have the following adaptation of the theorem on generalized Young measures.


\begin{theorem} \cite[Theorem 1 \& Remark 1]{DiPM}
\label{young}
(Generalized Young Measures)  If $(u_\eps)$ is an arbitrary family of functions whose $L^2$--norm on a set $[0, T] \times \Omega \subset\subset \R^n$ is uniformly bounded, then $(u_\eps)$ contains a subsequence with the following properties. There exists a Radon measure $\meta \in {\cal M}([0, T] \times \Omega)$
$$ |u_\eps |^2 \rightharpoonup \meta   \text{ in }  {\cal M}([0, T] \times \Omega),$$  
and a $\meta-$ measurable map 
$$(t,\mx) \mapsto \pi_{(t,\mx)}^1$$
from $[0,T]\times \Omega$ to ${\cal M}^+(\R)$ such that for all $g$ of the following form 
$$g(v) = g_0(v)(1+|v|^2),  \ \ g_0\in C_0(\R)$$ 
we have 
$$g(u_\eps) \rightharpoonup  \langle \pi^1, g_0\rangle(1 + h)d\mx dt ,$$
where $h$ is the coefficient in the Lebesgue decomposition $\meta=\meta_s+h(t,\mx)dt d\mx$ with $\meta_s$ being a singular part, i.e.
\begin{align}
\label{220}
\lim_{\eps \to 0}\int_0^T\!\!\int_\Omega \phi g(u_\eps)d\mx dt = \int_0^T\!\!\int_\Omega \phi \langle \pi_{(t,x)}^1, g_0\rangle(1 + h)d\mx dt,
\end{align}
for all $\phi \in C_0^\infty([0,T]\times\Omega)$.
\end{theorem}

The previous theorem enables us to specify the limit in the nonlinear term from \eqref{regularized}. We have the following theorem which includes the variational equality defining a solution to \eqref{che}.

\begin{theorem}
    The function $u$ given in Theorem \ref{existence} satisfying the following variational equality for some Young measure $\pi_{(t,\mx)}\in L^1_w([0,T]\times \Omega;{\cal M}(\R))$ and
 for all $\varphi \in W_2\cap Lip([0,T];H^2(\Omega))$ (see \eqref{W2}) with $\varphi(T,\mx)=0$ for all $\mx\in \Omega$, we have

\begin{align}
\label{def-sol}
&\int_0^T\int_\Omega u\, \pa_t\varphi \, d\mx dt  +\int_\Omega u_0(\mx)\, \varphi(0,\mx) d\mx \\& = \int_0^T\int_\Omega \left(\nu \,\int_{\R}\arctan(\xi)d\pi_{(t,x)}(\xi)\,  |\nabla u|    + \mu \Delta u \right)\Delta \varphi \, d\mx dt  - \int_0^T\!\int_\Omega \lambda(u_0 - u) \, \varphi \, d\mx dt.
\nonumber
\end{align} 
\item 

The connection between $\pi_{(t,\mx)}(\xi)$ and $u$ is given by
\begin{equation}
    \label{connection}
\Delta u(t,\mx)=\int_{\R} \xi \, d\pi_{(t,\mx)}(\xi).
\end{equation}

Moreover, if $\pi_{(t,\mx)}(\xi)=\delta(\xi-\Delta u(t,\mx))$ then the function $u$ satisfies \eqref{che-old} in the standard sense of distributions on $(0,T)\times \Omega$.
\end{theorem}
\begin{proof}

The only suspicious term in \eqref{def-sol} is the one involving $\arctan$. To this end, we have in the weak sense i.e. for any fixed $\varphi  \in W_2 \cap Lip([0,T];H^2(\Omega))$ 
\begin{equation}
\label{susp-1}
\begin{split} 
&\langle -\Delta \nu \big( \arctan(\Delta u(\,\cdot \,;\eps_n) \star_\mx \rho_{\eps_n})|\nabla u(\,\cdot \,;\eps_n)|\big)\star_\mx \rho_{\eps_n}, \varphi\rangle\\&=
-\int_0^T\int_\Omega \nu \big( \arctan(\Delta u(\,\cdot \,;\eps_n) \star_\mx \rho_{\eps_n})|\nabla u(\,\cdot \,;\eps_n)|\big)\star_\mx \rho_{\eps_n} \, \Delta \varphi \, d\mx dt
\\& =-\int_0^T\int_\Omega \nu  \arctan(\Delta u(\,\cdot \,;\eps_n) \star_\mx \rho_{\eps_n}) \, |\nabla u(\,\cdot \,;\eps_n)| \, \Delta (\varphi\star_\mx \rho_{\eps_n}) \, d\mx dt.
\end{split}
\end{equation} Here, we have by \eqref{H1conv}
\begin{equation*}
|\nabla u(\,\cdot \,;\eps_n)| \to |\nabla u| \ \ {\rm in} \ \ L^2([0,T]\times \Omega) \ \ {\rm as} \ \ n\to \infty,
\end{equation*} and by Theorem \ref{young} (along a non-relabelled sequence) for any $\phi\in C^\infty_0([0,T]\times \Omega)$
\begin{equation}
\label{atan}
\int_0^T\int_\Omega \arctan(\Delta u(\,\cdot \,;\eps_n)) \, \phi \, d\mx dt \to \int_0^T\int_\Omega \phi \, \langle \pi_{(t,x)}^1,\frac{\arctan(\,\cdot\,)}{1+|\,\cdot\,|^2} \rangle(1 + h)d\mx dt
\end{equation} where, with the notations from the latter theorem,  we take $g_0(v) = \frac{\arctan(v)}{1+v^2}$ in \eqref{220}. If we write
$$
\pi_{(t,\mx)}(\xi)= \frac{1 + h(t,\mx)}{1+|\xi|^2}\pi_{(t,x)}^1(\xi), \ \ \xi\in \R,
$$ we reach to the conclusion of the theorem. Indeed, to show \eqref{def-sol}, we fix $\eta>0$ and denote $\varphi_\eta=\varphi \star_{t,\mx} \rho_\eta$ (we have taken the convolution with respect to {both variables}). We have from \eqref{susp-1} (after taking Theorem \ref{young} into account and since $\varphi_\eta$ is smooth):
\begin{equation}
\label{susp-2}
\begin{split} 
&\lim\limits_{n\to \infty}
\langle -\nu \Delta  \big( \arctan(\Delta u(\,\cdot \,;\eps_n) \star_\mx \rho_{\eps_n})|\nabla u(\,\cdot \,;\eps_n)|\big)\star_\mx \rho_{\eps_n}, \varphi\rangle\\&=
-\lim\limits_{n\to \infty}\int_0^T\int_\Omega \nu  \arctan(\Delta u(\,\cdot \,;\eps_n) \star_\mx \rho_{\eps_n}) \, |\nabla u(\,\cdot \,;\eps_n)| \, \Delta \varphi_\eta\star_\mx \rho_{\eps_n} \, d\mx dt\\
&-\lim\limits_{n\to \infty} \int_0^T\int_\Omega \nu  \arctan(\Delta u(\,\cdot \,;\eps_n) \star_\mx \rho_{\eps_n}) \, |\nabla u(\,\cdot \,;\eps_n)| \, \Delta (\varphi-\varphi_\eta)\star_\mx \rho_{\eps_n} \, d\mx dt
\\&=-\int_0^T\int_\Omega \nu \Big( \int_{\R}\arctan(\xi)d\pi_{(t,x)}(\xi)\Big)  |\nabla u| \Delta \varphi_\eta \, d\mx dt +o_\eta(1).
\end{split}
\end{equation} where $o_\eta(1)$ is a standard Landau symbol denoting the function tending to zero as $\eta\to 0$.

Indeed, it holds

\begin{equation*}
\begin{split}
&\lim\limits_{n\to \infty}\Big|\int_0^T\int_\Omega \nu  \arctan(\Delta u(\,\cdot \,;\eps_n) \star_\mx \rho_{\eps_n}) \, |\nabla u(\,\cdot \,;\eps_n)| \, \Delta (\varphi-\varphi_\eta)\star_\mx \rho_{\eps_n} \, d\mx dt\Big|
\\& \leq \lim\limits_{n\to \infty} \|\nu  \arctan(\Delta u(\,\cdot \,;\eps_n) \star_\mx \rho_{\eps_n}) \, |\nabla u(\,\cdot \,;\eps_n)|\|_{L^2([0,T]\times \Omega)}  \, \| \Delta (\varphi-\varphi_\eta) \|_{L^2([0,T]\times \Omega)} 
\end{split}
\end{equation*} and since $\| \Delta (\varphi-\varphi_\eta) \|_{L^2([0,T]\times \Omega)}\to 0$ as $\eta\to 0$, we see that \eqref{susp-2} indeed holds. Finally, letting $\eta\to 0$ in \eqref{susp-2}, we discover
\begin{equation}
\label{susp-3}
\begin{split}
&\lim\limits_{n\to \infty}
\langle -\nu \,\Delta  \big( \arctan(\Delta u(\,\cdot \,;\eps_n) \star_\mx \rho_{\eps_n})|\nabla u(\,\cdot \,;\eps_n)|\big)\star_\mx \rho_{\eps_n}, \varphi\rangle\\
&=-\int_0^T\int_\Omega \nu \Big( \int_{\R}\arctan(\xi)d\pi_{(t,x)}(\xi)\Big)  |\nabla u| \Delta \varphi \, d\mx dt.
\end{split}
\end{equation} From the above, we see that we can test \eqref{regularized} by an arbitrary $\varphi \in W_2\cap Lip([0,T];H^2(\Omega))$, take into account conditions \eqref{ic-bc}, and let $\eps\to 0$ along the subsequence $(\eps_n)$ given above to reach to \eqref{def-sol}.

To prove the last part of the theorem, we simply notice that
$$
\Delta u(\cdot;\eps_n) \rightharpoonup \Delta u \ \ {\rm in} \ \ L^2([0,T]\times \Omega) \ \ {\rm as} \ \ n\to \infty.
$$ On the other hand, we can express the limit of $(\Delta u(\cdot;\eps_n))$ via the measure $d \pi_{(t,\mx)}(\xi)$ as in \eqref{atan}
$$
\Delta u(\cdot;\eps_n) \rightharpoonup \int_{\R} \xi \, d \pi_{(t,\mx)}(\xi) \ \ {\rm in} \ \ {\cal D}'([0.T]\times \Omega) \ \ {\rm as} \ \ n\to \infty.
$$ Comparing the last two relations, we reach to \eqref{connection}. The final statement in the theorem simply follows by substituting $\pi_{(t,\mx)}(\xi)=\delta(\xi-\Delta u(t,\mx))$ in \eqref{atan}.
\end{proof}

\subsection{Equation \eqref{che} with Dirichlet and Neumann conditions}

We shall now briefly discuss how to formulate a solution concept which would incorporate Dirichlet conditions \eqref{dirichlet} or Neumann conditions \eqref{neumann}, and how to prove well-posedness of the equation in the framework of such a concept.  To this end, we need to adapt the approximation procedure as follows.

We need the characteristic function $\chi_\eps$ of the $\eps^{1/2}$-interior of $\Omega_{\eps^{1/2}}=\{\mx \in \Omega:\, {\rm dist}(\mx,\pa \Omega)>\eps^{1/2} \}$:
$$
\chi_\eps(\mx)=\begin{cases}
1, & \mx\in \Omega_{\eps^{1/2}}\\
0, & \mx\notin \Omega_{\eps^{1/2}}
\end{cases}.
$$

We note that the support of the convolution $\chi_\eps\star \rho_\eps$, with $\rho_\eps$ given in \eqref{molif1}, is still compact in $\Omega$ for $\eps$ small enough:
\begin{equation}
\label{support}
{\rm supp}\chi_\eps\star \rho_\eps \subset \Omega, \ \ \eps<<1.
\end{equation}  Now, we consider the regularized equation 
\begin{align}
\label{regularized-1}
\pa_t u(\,\cdot\,;\eps) &= \Delta \Big(-\nu \big(\chi_\eps\, \arctan(\Delta u(\,\cdot \,;\eps) \star_\mx \rho_\eps)|\nabla u(\,\cdot \,;\eps)|\big)\star_\mx \rho_\eps - \mu \Delta u(\,\cdot\,;\eps) \Big) + \lambda(u_0 - u(\,\cdot\,;\eps))\\& :=\Delta \Big(-\nu \big(\chi_\eps \,\arctan(\Delta u_\eps)|\nabla u|\big)_\eps - \mu \Delta u \Big) + \lambda(u_0 - u),
\nonumber
\end{align} and augment it by the initial condition \eqref{ic}, and Dirichlet \eqref{dirichlet} or Neumann \eqref{neumann} boundary data. We note that presence of the cutoff function $\chi_\eps$ enables us to apply integration by parts as in the case of equation \eqref{regularized} despite inappropriate behavior of $u(\,\cdot\,;\eps)$ on the boundary.

Let us focus on the case of Dirichlet boundary data (the Neumann initial data are considered analogically). To this end, we look for an approximate solution to \eqref{regularized} in the form
\begin{equation}
\label{span}
u_n(t,\mx;\eps)=\sum\limits_{k=1}^n \alpha^n_k(t;\eps)e_k(\mx),
\end{equation} where $\alpha^n_k(t;\eps) \in C^1([0,T])$ are unknown functions and $e_k(\mx)$, $k\in {\bf N}$, are eigenvectors of the Laplace operator with zero boundary data i.e. they solve
\begin{equation}
\label{lapl}
\Delta e_k=\lambda_k e_k, \qquad e_k\big|_{\pa \Omega}=0.
\end{equation} Note that $e_k$, $k\in {\bf N}$, also satisfy $\Delta e_k\big|_{\pa \Omega} =\lambda_k e_k\big|_{\pa \Omega}=0$ which makes the approximation \eqref{span} consistent with boundary data \eqref{dirichlet}. Then, using the apriori estimates as in the proof of Theorem \ref{T1} together with the Aubin-Lions lemma, we are able to obtain the strong $L^2([0,T];H^1(\Omega))$-convergence of the sequence $(u_n)$ and the limit will be the weak solution to \eqref{regularized-1}, \eqref{ic}, \eqref{dirichlet}. 

To proceed, we introduce the space
$$
W_n=\{\varphi \in  C^1([0,T];H^2(\Omega)): \, \exists n\in  {\bf N} \ \ \varphi(t,\mx)\in Span\{e_k \}_{k=1,\dots,n} \},
$$ where $Span\{e_k \}_{k=1,\dots,n}$ is the standard span of the basis $\{e_k \}_{k=1,\dots,n}$ of the form \eqref{span} consisting of solutions to \eqref{lapl}.

Then, we let $\epsilon \to 0$ along an appropriate subsequence in \eqref{regularized-1} in the sense of distributions, and we will end up with \eqref{regularized-limit}. A similar situation arises with the entropy conditions, and one can derive them in the same way as when considering \eqref{regularized-1}. Consequently, an admissible solution to \eqref{che} with \eqref{dirichlet} will be unique.

A natural question here is how to distinguish between solutions to \eqref{che} with different initial conditions. This is done by making an appropriate choice of the test function. We introduce the following definition of an entropy solution to \eqref{che}, \eqref{ic}, and \eqref{dirichlet}. 

\begin{definition}
\label{def-admissibility-D} We say that the function $u\in L^2([0,T];H^2(\Omega))$ represents an entropy solution to problem \eqref{che}, \eqref{ic}, \eqref{dirichlet} if
the following inequality holds for every $\varphi \in W_n$ satisfying \eqref{dirichlet} and almost every $t\in [0,T]$:
\begin{equation}
\label{solu-3}
\begin{split}
  & \int_{\Omega}\frac{1}{2}( u(t,\mx)-\varphi(t,\mx))^2   d\mx -\int_{\Omega}\frac{1}{2}( u_0(\mx)-\varphi(0,\mx))^2   d\mx  \\&+ \int_0^t \!\!\int_{\Omega}(u-\varphi)\pa_t \varphi d\mx dt' \leq \int_0^t\!\!\int_\Omega  \Big(\nu \arctan(\Delta \varphi)|\nabla u|\big) \Delta(u-\varphi) +\mu \Delta \varphi \Delta(u-\varphi) \Big)d\mx dt' \\&+ \int_0^t\int_\Omega \lambda(u_0 - u)(u-\varphi)d\mx dt'.
\end{split}    
\end{equation}

\end{definition} The completely same procedure works in the case of conditions \eqref{neumann}, but we need to replace the eigenvalues $e_k$ by the ones solving the Neumann problem

\begin{equation*}
\Delta e_k=\lambda e_k, \qquad \frac{\pa e_k}{\pa \vec{n}}\big|_{\pa \Omega}=0.
\end{equation*} Regarding existence and uniqueness proofs, they go along the lines of Theorem \ref{T1} and Theorem \ref{T-uniqueness}. The proof of Theorem \ref{T1} for the problem \eqref{che}, \eqref{ic}, \eqref{dirichlet} is completely the same as the one for \eqref{che}, \eqref{ic}, \eqref{bc}.

We also note that in the uniqueness proof, it is enough to replace the convolutions $u^\eps$ and $v^\eps$ appearing there by $N$-dimensional approximations of the entropy solutions $u$ and $v$ of the form \eqref{span} and then to let the approximation parameter $N$ tend to infinity. 


\section{Numerical method}

In this section, we implement a fast solver for the Shock Filter CHE based on the convexity splitting idea introduced by Eyre \cite{Eyre98}. This method was later refined by Vollmayr-Lee and Rutenberg \cite{Vol03} and applied to image inpainting problems by Bertozzi, Esedoglu and Gillette \cite{Bert07, Bert07d}.

Let us recall that the CHE is a gradient flow in $H^{-1}$ with the energy
\begin{align}
	\mathcal{E}_1[u] &= \int_{\Omega}\left(\dfrac{\varepsilon}{2}|\nabla u|^2 + \dfrac{1}{\eps}H(u)\right)  d\mx,
\end{align}
and the fidelity term can be obtained from the $L^2$ gradient flow
\begin{align}
	\mathcal{E}_2[u] &= \int_{\Omega \setminus\omega} \lambda_0(u_0-u)^2 d\mx.
\end{align}
Here, $h(u)$ is a real-valued differentiable function such that 
\begin{align}\label{hH}
h(u) =  \nabla_{H^{-1}}\left(\int_\Omega H(u)d\mx\right) =  -|\nabla u| \arctan({\Delta u}),
\end{align}
where $\nabla_{H^{-1}}(\cdot)$ means the gradient descent with respect to the $H^{-1}$ norm. 
It is worth noting that only the function $h$ is needed for the numerical scheme, and the formula \eqref{hH} is provided for clarity, as $H$ is not used in the proposed numerical method. For the simulations, we follow the practice where $\nu \cdot \mu = 1$ ($\nu= \epsilon$ and $\mu = 1/\eps$), which experimentally showed the best inpainting results for CHE with a double-well potential.

As already mentioned, the considered equation is neither a gradient flow in $L^2$ nor in $H^{-1}$, but we can still adopt the idea of convexity splitting to obtain fast inpainting results. Namely, the idea is to apply the convexity splitting on $\mathcal{E}_1$ and $\mathcal{E}_2$ to split each of the energies into a convex and a concave part and then construct a semi-implicit numerical scheme. The concave part is treated explicitly, and the convex terms are treated implicitly in time. More precisely, if we write the energies as: 
\begin{align}
	\mathcal{E}_1[u] &= \mathcal{E}_{11}[u] - \mathcal{E}_{12}[u],\\  
    \mathcal{E}_2[u] &= \mathcal{E}_{21}[u] - \mathcal{E}_{22}[u].
\end{align}
The resulting convexity splitting scheme is unconditionally energy stable, unconditionally solvable, and converges optimally in the energy norm if all the energies $\mathcal{E}_{ij}$, $i,j = {1,2}$ are convex. 
To make the energies convex, we employ the following decomposition:
\begin{align}\label{e11}
	\mathcal{E}_{11}[u] &= \int_{\Omega} \left(\frac{\varepsilon}{2}\qty|\nabla u|^2 + \frac{C_1}{2}|u|^2\right)d\mx,\\  \label{e12}
    \mathcal{E}_{12}[u]&= \int_{\Omega}  \left(-\frac{1}{\varepsilon}{H(u)}+ \frac{C_1}{2}|u|^2\right)d\mx,\\\label{e21}
    \mathcal{E}_{21}[u] &=\int_{\Omega \setminus\omega}\frac{C_2}{2}|u|^2d\mx,\\ \label{e22}
    \mathcal{E}_{22}[u] &=\int_{\Omega \setminus\omega}  \left(-\lambda_0(u_0-u)^2+\frac{C_2}{2}|u|^2 \right)d\mx.
\end{align}
The constants $C_1, C_2 > 0$ need to be chosen large enough to ensure that the energies $\mathcal{E}_{ij}[u]$ are convex. Obviously, $\mathcal{E}_{11}[u]$, $\mathcal{E}_{21}[u]$ are convex independently on $C_1$ and $C_2$, and if $C_2 > \lambda_0$ then $\mathcal{E}_{22}[u]$ is convex. Furthermore, if we take into account \eqref{hH}, for the simulations, we can always find $C_1(\varepsilon)$ large enough so that $\mathcal{E}_{12}[u]$ is convex.

The splitting defined with \eqref{e11},\eqref{e12},\eqref{e21},\eqref{e22}, yields the following time-stepping scheme
\begin{align}
    \label{TS-energies}
	\frac{u^{n+1}-u^n}{\Delta t} = -\nabla_{H^{-1}}\qty\left(\mathcal{E}_{11}\qty[u^{n+1}]-\mathcal{E}_{12}\qty[u^{n}]\right)-\nabla_{L^2} \qty(\mathcal{E}_{21}\qty[u^{n+1}] - \mathcal{E}_{22}\qty[u^n]),
\end{align}
where $n$ denotes the time step and $\Delta t$ denotes step size. 
Inserting $\mathcal{E}_{ij}$ into \eqref{TS-energies} translates to the following numerical scheme:
\begin{align}\label{DS-equations}
	\qty(1+C_2\Delta t)u^{n+1} + & \,\varepsilon\,\Delta t\,\Delta ^2 u^{n+1} - C_1\,\Delta t\,\Delta u^{n+1} \nonumber\\
	& = \frac{\Delta t}{\varepsilon} \Delta h(u^n) - C_1\,\Delta t\,\Delta u^n + \lambda\,\Delta t\,(u_0-u^n) + \qty(1+C_2\Delta t)u^n.
\end{align}
Because of the significant simplification of Laplacian and Bi-Laplacian operators in the Fourier space, described hereafter, we will implement the Fourier spectral method for spatial discretization.

After computing the Fourier transform, rearranging \eqref{DS-equations} results in the following iteration scheme: 
\begin{equation}\label{scheme}
\begin{split}
    (\widehat{u^{n+1}})_{k,l} = \frac{\widehat{u^n}_{k,l}+\Delta t\qty\left(\dfrac{1}{\varepsilon} K_{k,l}\cdot (\widehat{h(u^n)})_{k,l} - C_1 K_{k,l}\cdot \widehat{u^n}_{k,l} + \widehat{\qty(\lambda(u_0-u^n))}_{k,l}+C_2 \widehat{u^n}_{k,l}\right)}{1+ C_2+\varepsilon K_{k,l}^2-C_1 K_{k,l}}.
\end{split}
\end{equation}
To obtain $u^{n+1}$ in the direct space we simply perform the inverse discrete Fourier transformation on $\widehat{u^{n+1}}$.
Finally, let us just note that in the case of CHE, it is easy to obtain first-order schemes that are unconditionally energy stable using the convexity splitting \eqref{e11},\eqref{e12},  and similar extensions could be done for \eqref{e21}, \eqref{e22}. However, given that modified CHE cannot be derived as a gradient flow in Hilbert space, one cannot extend the approach from \cite{Pe19, Shin17, Sh21} to our case. 
Nevertheless, as we will demonstrate in the next section, the inpainting results obtained with the proposed approach are obtained fast and of high quality as long as constants $C_1$ and $C_2$ are large enough to ensure the convexity of the energies $\mathcal{E}_{ij}.$

\section{Results}

In this section, we explore the potential applications of the proposed Shock Filter Cahn-Hilliard equation for image inpainting problems. We focus on the inpainting of binary images containing conventional shapes, such as stripes and crosses, and compare these results with the ones obtained with the CHE equipped with the conventional double-well potential. Throughout the provided examples, the designated inpainting area is marked in grey.

In all our simulations, we utilize the two-scale approach delineated in \cite{Bert07d}. This method has shown efficacy in bridging edges over extensive inpainting regions when applied to the CHE. In the first step, the inpainting is carried out with a larger value of the regularizing parameter, resulting in the topological reconnection of shapes and edges that may have been smeared by diffusion. The second step utilizes the results from the first step and continues with a much smaller value of the regularizing parameter to sharpen the edges after reconnection. 

All programs were written in Matlab, in accordance with the code and notation written by Parisotto and Schönlieb \cite{Simo20}, which served as the reference code for the simulations involving the CHE with double well potential.
All simulations were executed on a standard desktop computer. For simplicity, we scaled the images to grayscale so that their intensities lie within the range $[0, 1]$. Unless otherwise specified, all time values are presented as the number of iterative steps taken by the numerical algorithm, with $t = n$ indicating the $n-$th step of the numerical scheme. Moreover, all methods in our evaluation use optimized parameters obtained through the exhaustive enumeration method.

\subsection{Inpainting of the standard binary shapes}
\begin{figure}[H]
\centering  
\subfigure[Original image]{\label{1a}\includegraphics[width=4cm]{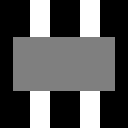}}
\subfigure[CH equation]{\label{1b}\includegraphics[width=4cm]{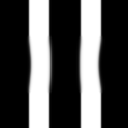}}
\subfigure[Proposed equation]{\label{1c}\includegraphics[width=4cm]{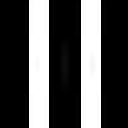}}
    \caption{Results of image inpainting for the image of stripes.}
\label{fig1}
\end{figure}

In Figure \ref{fig1}, we present a comparison between the proposed equation and the CHE equipped with the double-well potential for inpainting a standard binary image of stripes. The image domain is represented by a grey rectangle, which is larger than what is commonly used in the literature for this test image \cite{Bert07d, Garc18}. The comparison was based on the quality of the resulting images, as well as the presence of diffusive effects introduced by the CHE. 
Figure \ref{1b} was obtained after 8000 iterations, where the first 4000 iterations were done with $\eps = 100$, and then $\eps = 1$. Furthermore, besides the irregularly reconstructed inpainting domain, we can observe the diffusive effects in the regions containing black-to-white or white-to-black transitions (edges). On the other hand, in Figure \ref{1c} we have recovered the sharp edges and the natural continuation of the features of the image into the inpainting domain. The image was obtained after 10000 iterations, using the same two-scale approach with $\eps = 100$ and then $\eps = 2$. As discussed later, the inpainting process seems to slow down as the inpainting domain becomes barely detectable. Even though mean-squared error (MSE) values are known to be in discordance with the human eye, for this example, the image obtained by the CHE resulted in an MSE of 0.0172, while the image obtained by the proposed equation has an MSE of 0.0157.
 
\begin{figure}[H]
\centering  
\subfigure[Original image]{\label{2a}\includegraphics[width=4cm]{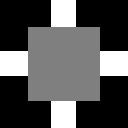}}
\subfigure[CH equation]{\label{2b}\includegraphics[width=4cm]{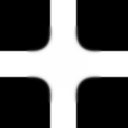}}
\subfigure[Proposed equation]{\label{2c}\includegraphics[width=4cm]{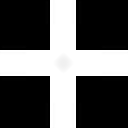}}
    \caption{Results of image inpainting for the image of a cross.}
\label{fig2}
\end{figure}

In Figure \ref{fig2}, we consider a cross-shaped image with a damaged region at the centre. The results of the inpainting process obtained with the CHE are presented in Figure \ref{2b} and are obtained after 8000 iterations, following the two-scale approach with $\eps = 100$, and then $\eps = 1$.
Let us note that these results are consistent with the results obtained in previous studies (e.g., Figure 3.1 in \cite{Bert11}, Figures 1 and 2 in \cite{Bert07}, or Figure 3 in \cite{Bosc14}). 
In the inpainted image produced by the proposed equation in Figure \ref{2c}, we observe that it effectively restores the missing information while preserving the edges and extending the image features in a natural manner. In comparison with Figure 4 in \cite{Nova22}, the newly formed edges in the centre of the image seem more natural and can be obtained without coupling the equation with the thresholding operation. This image was obtained after 30000 iterations with $\eps = 100$ and then $\eps = 2$.

\begin{figure}[H]
\centering  
\subfigure[After 100 iterations]{\label{3a}\includegraphics[width=3.8cm]{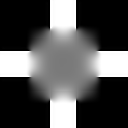}}
\subfigure[After 1000 iterations]{\label{3b}\includegraphics[width=3.8cm]{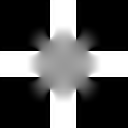}}
\subfigure[After 4000 iterations]{\label{3c}\includegraphics[width=3.8cm]{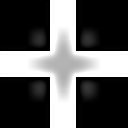}}
\subfigure[After 8000 iterations]{\label{3d}\includegraphics[width=3.8cm]{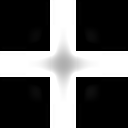}}
    \caption{Snapshots of the evolution of the inpainting process for the image of a cross (as in Figure \ref{fig2}).}
\label{fig3}
\end{figure}

Figure \ref{fig3} presents a sequence of snapshots demonstrating the progression of the inpainting process when the maximal number of iterations is set to 100, 1000, 4000, and 8000 iterations. It can be observed that the initial phase of the inpainting process is characterized by a rapid recovery of the image features. However, as the inpainting domain diminishes, the process tends to become slower, and the area of image restoration between iteration numbers 4000 and 8000 (Figure \ref{3c} and Figure \ref{3d}) is relatively modest compared to the area recovered during the initial 1000 iterations (Figure \ref{3a} and Figure \ref{3b}). The final image was obtained after 30000 iterations and is displayed in Figure \ref{2c}.

\begin{figure}[H]
\centering  
\subfigure[Original image]{\label{4a}\includegraphics[width=3.8cm]{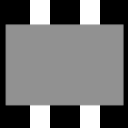}}
\subfigure[Resulting image (a)]{\label{4b}\includegraphics[width=3.8cm]{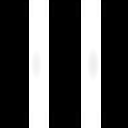}}
\subfigure[Original image]{\label{4c}\includegraphics[width=3.8cm]{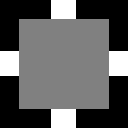}}
\subfigure[Resulting image (b)]{\label{4d}\includegraphics[width=3.8cm]{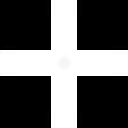}}
    \caption{Inpainting over large domains using the Shock Filter Cahn-Hilliard Equation.}
\label{fig4}
\end{figure}
The results of inpainting for a large inpainting domain are displayed in Figure \ref{fig4}, where one can observe that the image features have been retrieved without any apparent diffusive effects. However, it is noteworthy that the process required a substantial number of iterations to restore the entire image because the size of the inpainting domain constituted approximately $60\%$ of the image size. The same parameter $\eps$ was used as in Figures \ref{fig1} and \ref{fig2}. Figure \ref{4b} was obtained after 20000 iterations, while Figure \ref{4d} was obtained after 60000 iterations.

\subsection{Inpainting results for different choice of parameters}

\begin{figure}[H]
\centering  
\subfigure[Original image]{\label{5a}\includegraphics[width=3.0cm]{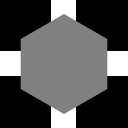}}
\subfigure[After 200 iterations]{\label{5b}\includegraphics[width=3.0cm]{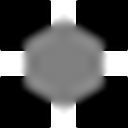}}
\subfigure[After 1000 iterations]{\label{5c}\includegraphics[width=3.0cm]{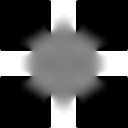}}
\subfigure[After 4000 iterations]{\label{5d}\includegraphics[width=3.0cm]{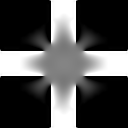}}
\subfigure[After 8000 iterations]{\label{5e}\includegraphics[width=3.0cm]{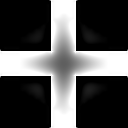}}

\subfigure[Original image]{\label{5f}\includegraphics[width=3.0cm]{img/cross_arctg_6_init.png}}
\subfigure[After 200 iterations]{\label{5g}\includegraphics[width=3.0cm]{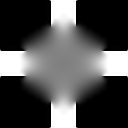}}
\subfigure[After 1000 iterations]{\label{5h}\includegraphics[width=3.0cm]{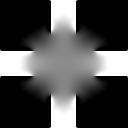}}
\subfigure[After 4000 iterations]{\label{5i}\includegraphics[width=3.0cm]{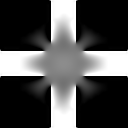}}
\subfigure[After 8000 iterations]{\label{5j}\includegraphics[width=3.0cm]{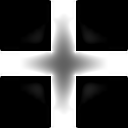}}

\subfigure[Original image]{\label{5k}\includegraphics[width=3.0cm]{img/cross_arctg_6_init.png}}
\subfigure[After 200 iterations]{\label{5l}\includegraphics[width=3.0cm]{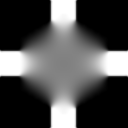}}
\subfigure[After 1000 iterations]{\label{5m}\includegraphics[width=3.0cm]{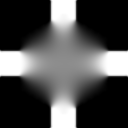}}
\subfigure[After 4000 iterations]{\label{5n}\includegraphics[width=3.0cm]{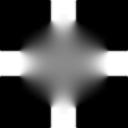}}
\subfigure[After 8000 iterations]{\label{5o}\includegraphics[width=3.0cm]{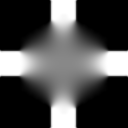}}
    \caption{Inpainting over large and complex domains using the Shock Filter Cahn-Hilliard Equation for various parameter choices.}
\label{fig5}
\end{figure}

Figure \ref{fig5} depicts a binary image of a cross with a hexagonal inpainting domain located in the centre of the image. Each row exhibits the temporal evolution of the inpainting process for a different choice of parameters $\eps$ with a maximal number of iterations set to 200, 1000, 4000, and 8000 iterations, respectively. We can observe that the changes made inside the inpainting domain are similar for the first two experiments, that is, if the process converges, then it converges following the comparable paths.
In the first experiment (Figures \ref{5a}-\ref{5e}), the parameters used were $\eps = 5$ initially, followed by $\eps = 2$. The second row (Figures \ref{5f}-\ref{5j}), involved the use of $\eps = 100$ initially, followed by $\eps = 2$, which was the same as in the first experiment. In the third experiment (Figures \ref{5k}-\ref{5o}), we used $\eps = 100$ initially, and then $\eps =3$, and we can notice that for this choice of parameters the inpainting domain seems to remain unchanged after the first 200 iterations.


\subsection{Limitations}
Upon testing the Shock Filter CHE, we observed that it exhibited suitable performance in the context of inpainting when the underlying image comprised straight lines, edges, and corners. However, for images featuring numerous flow-like structures, such as fingerprint images, the performance was modest, and it is advisable to rely on the classical Cahn-Hilliard equation in such cases. We believe that this could be resolved by replacing the Laplacian as the edge classifier for a structure tensor as in the work of Weickert \cite{Weic03} to guarantee that this equation creates shocks orthogonal to the flow direction of the pattern.
It is noteworthy that the selection of appropriate parameters poses a significant challenge for all PDE models, and this issue is equally relevant for this model, especially in the context of nonstandard images and large domains. For example, in Figure \ref{fig5} one can notice that a small change in the $\eps$ in the last experiment resulted in the incomplete inpainting of the unknown region.
We have also observed that the CHE required fewer iterations to produce the inpainted images compared to the proposed method. For instance, Figure \ref{2b} was obtained after 8000 iterations of the two-step process, whereas Figure  \ref{2c} was obtained after 30000 iterations. 
Overall, these findings suggest that the proposed equation is a viable alternative to the CHE for image inpainting, as it produces images with fewer diffusive effects, albeit requiring more iterations than the CHE.
In short, these results highlight the trade-off between computational efficiency and the quality of the reconstructed images when selecting an appropriate nonlinear classifier $H$.

\section*{Conclusions and final remarks}
Inspired by the inpainting methods based on the Cahn-Hilliard equation, we have introduced the fully nonlinear fourth-order PDE in which the image reconstruction is driven by the diffusion joined by the nonlinear term belonging to the class of morphological image enhancement methods. 
Our primary objective was to establish the well-posedness of this equation and investigate its potential for inpainting tasks. 

To achieve the latter, we devised a robust adaptation of the Kruzhkov entropy admissibility approach which enabled us to address the challenging non-linearity present in the equation and to demonstrate the existence and stability of an appropriate class of solutions.

Our work not only contributes to the field of inpainting but also sheds light on the broader topic of non-linear partial differential equations in image processing. By establishing the well-posedness of our equation, we have taken a significant step towards enhancing the understanding of complex non-linear systems in this context.

The obtained results provide a solid theoretical foundation for the practical application of this inpainting model. To this end, we have utilized the idea of convexity splitting to derive a numerical scheme for practical implementation. Using the testing binary images that contain straight lines or edges in the inpainting domain, we have demonstrated that the proposed morphological classifier naturally extends image structures and preserves image sharpness, leading to superior results compared to a standard Cahn-Hilliard equation with a double-well potential. 
For binary shapes that contain a lot of curves (such as fingerprint image or zebra), we have noticed that the introduced equation shows more modest results and it seems to be more sensitive to changes in parameters as compared to the classical CHE.

In conclusion, we have presented a well-posed formulation of the Shock Filter Cahn-Hilliard equation for image inpainting and conducted numerical simulations in order to solidify both its theoretical foundation and demonstrate its potential for practical applications.
We hope that our result will contribute toward enriching the theoretical landscape of inpainting models and providing a deeper perspective on the role of fully non-linear PDEs in image processing.

 \section*{Acknowledgement}
This work was supported in part by the Croatian Science Foundation under project number HRZZ-MOBODL-2023-08-7617 and IP-2022-10-7261 Analysis of Partial Differential Equations and Shape Optimization (ADESO), the Austrian Science Foundation (FWF) Stand Alone Project number P-35508-N, and the Croatian-Austrian bilateral project Mathematical Aspects of Granular Hydro-dynamics – Modelling, Analysis, and Numerics.

\section*{Data Availability}

Data sharing is not applicable to this article as no datasets were generated or analyzed during the current study.


\begin{thebibliography}{99}


\bibitem{Rain20} R.~Backofen, S.M.~ Wise, M.~Salvalaglio, A.~Voigt, {\em Convexity splitting in a phase field model for surface diffusion}, Int.J. Num. Anal. Model.  {\bf 16} (2020), 192--209.

\bibitem{Belt00} M.~Bertalmio, G.~Sapiro, V.~Caselles and C.~Ballester, {\em Image inpainting}, Proceedings of the 27th annual conference on Computer graphics and interactive techniques. ACM Press/Addison-Wesley Publishing Co., 2000.

\bibitem{Bert01} M.~Bertalmio, A.~Bertozzi, and G.~Sapiro, {\em Navier-Stokes, fluid dynamics, and image and video inpainting},  Computer Vision and Pattern Recognition, 2001. CVPR 2001. Proceedings of the 2001 IEEE Computer Society Conference on. Vol. 1. IEEE, 2001.

\bibitem{Bert07} A.~Bertozzi, S.~Esedoglu, and A.~Gillette, {\em Inpainting of binary images using the Cahn--Hilliard equation}, IEEE Transactions on Image Processing {\bf 16} (2006), 285--291.  %

\bibitem{Bert07d} A.~Bertozzi, S.~Esedoglu, A.~Gillette, {\em Analysis of a two-scale Cahn–Hilliard model for binary image inpainting}, Multiscale Modeling \& Simulation {\bf 6} (2007), 913--936.


\bibitem{Bert11} A. Bertozzi, C. B. Schönlieb, {\em Unconditionally stable schemes for higher order inpainting} Comm. Math. Sci. {\bf 9} (2011), 413--457.

\bibitem{BB} S.~Bianchini, A.~Bressan, {\em Vanishing viscosity solutions of nonlinear hyperbolic systems}, Annals of Mathematics {\bf 161} (2005), 223--342.


\bibitem{BE} J.~F.~Blowey, C.~M.~Elliot, {\em The Cahn–Hilliard gradient theory for phase separation
with non-smooth free energy Part 1: Mathematical analysis}, European J. Appl. Math. {\bf 2} (1991), 233--280.


\bibitem{Bosc14} J.~Bosch, D.~ Kay, M.~ Stoll, and A.~ J.~ Wathen,  {\em Fast solvers for Cahn--Hilliard inpainting}, SIAM J. Imaging Sci. {\bf 7} (2014), 67--97. 

\bibitem{Bosc15} J.~Bosch, M.~ Stoll,  {\em A fractional inpainting model based on the vector-valued Cahn--Hilliard equation}, SIAM J. Imaging Sci. {\bf 8} (2015), 2352--2382.

\bibitem{Brki20} A. L. Brki\'c, D. Mitrovic, A. Novak, {\em On the image inpainting problem from the viewpoint of a nonlocal Cahn-Hilliard type equation}, J. Adv. Res. {\bf 25} (2020), 67--76.

\bibitem{Brki18} A. L. Brkić, A. Novak, {\em A nonlocal image inpainting problem using the linear Allen–Cahn equation}, Conference on Non-integer Order Calculus and Its Applications. Springer, 2018.

\bibitem{BS} R.~Brown, Z.~W.~Shen, {\em The initial-Dirichlet problem for a fourth order parabolic equation in Lipschitz cylinders}, Indiana Univ. Math. J. {\bf 39} (1990), 1313--1353. 

\bibitem{Bua06} A.~Buades, C.~Bartomeu, M.~Jean-Michel, {\em Neighborhood filters and PDE's}, Numerische Mathematik {\bf 105} (2006), 1--34.

\bibitem{Burg09} M.~Burger, L.~He, C.-B. Sch\"onlieb, {\em Cahn–Hilliard inpainting and a generalization for grayvalue images}, SIAM J. Imaging Sci. {\bf 2} (2009), 1129--1167.


\bibitem{Cahn} J. W. ~Cahn , J. E.~ Hilliard, {\em Free energy of a nonuniform system. I. Interfacial free energy}, The Journal of Chemical Physics {\bf 28} (1958), 258--267.



\bibitem{Cal10} J.~Calder, A.~Mansouri,  A.~Yezzi, {\em Image sharpening via Sobolev gradient flows}, SIAM J. Imaging Sci. {\bf 3} (2010), 981--1014.

\bibitem{carillo} J.~A.~Carrillo, S.~Kalliadasis, F.~Liang, S.~P.~Perez, {\em Enhancement of damagedimage prediction through Cahn–Hilliard image
inpainting}, R. Soc. Open Sci. {\bf 8} (2021), 201294.


\bibitem{Cate18} M.E.~Cates, E.~Tjhung, {\em Theories of binary fluid mixtures: from phase-separation kinetics to active emulsions}, J. Fluid Mech. {\bf 836} (2018), P1.

\bibitem{Chan06} T.F.~Chan, J.~Shen,  H.-M.~Zhou, {\em Total variation wavelet inpainting}, Journal of Mathematical imaging and Vision {\bf 25} (2006), 107--125.
  %

\bibitem{Chen08} M.~Cheng, J.A.~ Warren, {\em An efficient algorithm for solving the phase field crystal model}, J. of Computational Physics {\bf 227} (2008), 6241--6248.

\bibitem{Mira16} L.~Cherfils, H.~Fakih, A.~Miranville, {\em A Cahn--Hilliard system with a fidelity term for color image inpainting}, J. Math. Imag. Vis. {\bf 54} (2016), 117--131.

\bibitem{cher} M.~F.~Cherepova, {\em Boundary value problem for a higher-order parabolic equation with growing coefficients}, Diff. Eq. {\bf 44} (2008), 527--537.

\bibitem{Mira17} L.~Cherfils, H.~Fakih, A.~Miranville, {\em A complex version of the Cahn--Hilliard equation for grayscale image inpainting}, Multiscale Modeling \& Simulation {\bf 15} (2017), 575--605.

\bibitem{Cher15} L.~Cherfils, F.~Hussein, A.~Miranville, {\em On the Bertozzi--Esedoglu--Gillette--Cahn--Hilliard equation with logarithmic nonlinear terms}, SIAM J. Imaging Sci. {\bf 8} (2015), 1123--1140.

\bibitem{Cher15d} L.~Cherfils, F.~Hussein, A.~Miranville, {\em Finite-dimensional attractors for the Bertozzi--Esedoglu--Gillette--Cahn--Hilliard equation in image inpainting}, Inverse Problems $\&$ Imaging {\bf 9} (2015), 105--125.

\bibitem{Cher11} L.~Cherfils, A.~Miranville, S.~Zelik, {\em The Cahn-Hilliard equation with logarithmic potentials}, Milan J. Math. {\bf 79} (2011), 561--596.
 


\bibitem{Lio} M.~G.~Crandall, H.~Ishii, P.~L.~Lions, {\em User's guide to viscosity solutions of second order partial differential equations}, Bull. AMS {\bf 27} (1992), 1--67. 


\bibitem{Cope92} M.I.M.~ Copetti, C.M.~Elliott, {\em Numerical analysis of the Cahn--Hilliard equation with a logarithmic free energy}, Numerische Mathematik, {\bf 63} (1992), 39--65.

\bibitem{DD} S.~Dai, Q.~Du, {\em Weak Solutions for the Cahn--Hilliard Equation
with Degenerate Mobility},
 Arch. Rational Mech. Anal. {\bf 219} (2016), 1161--1184.

\bibitem{DiPM} R.~DiPerna, A.~Majda, {\em Oscillation and concentration in weak solutions in the incompressible fluid equations}, Comm. Math. Phys. {\bf 108} (1987), 667--689.

\bibitem{EMZ} L.~Escauriaza, S.~Montaner, C.~Zhang, {\em Analyticity of solutions to parabolic evolutions and applications}, SIAM J. Math. Anal. {\bf 49} (2017), 4064--4092.

\bibitem{Esed04}  S.~Esedoglu, S.J.~Osher, {\em Decomposition of images by the anisotropic Rudin‐Osher‐Fatemi model}, Comm. Pure  App. Math. {\bf 57} (2004), 1609--1626.

\bibitem{Esed02}  S.~Esedoglu,  S.~Jianhong, {\em Digital inpainting based on the Mumford–Shah–Euler image model}, Eur. J. App. Math. {\bf 13} (2002), 353--370.


\bibitem{Eyre98} D.J.~Eyre, {\em Unconditionally gradient stable time marching the Cahn-Hilliard equation,} MRS Online Proceedings Library (OPL) 529 (1998).


\bibitem{Garc18} H.~Garcke, K.~Fong Lam, and V. Styles, {\em Cahn--Hilliard inpainting with the double obstacle potential}, SIAM Journal on Imaging Sciences {\bf 11} (2018), 2064-2089.



\bibitem{Gil04} G.~Gilboa, N.~ Sochen,  Y.Y.~Zeevi, {\em Image enhancement and denoising by complex diffusion processes}, IEEE Trans.Pattern Anal. Machine Intelligence {\bf 26} (2004), 1020--1036.

\bibitem{Glas16} K.~Glasner, S.~ Orizaga, {\em Improving the accuracy of convexity splitting methods for gradient flow equations}, J. Comp. Phys. {\bf 315} (2016), 52--64.

\bibitem{Gome11}  H.~Gomez, T.~JR Hughes, {\em Provably unconditionally stable, second-order time-accurate, mixed variational methods for phase-field models}, J. Comp. Phys. {\bf 230} (2011), 5310--5327.


\bibitem{Fis} P.~Fischer, J.~ Mergheim, P.~ Steinmann, {\em On the C1 continuous discretization of non‐linear gradient elasticity: A comparison of NEM and FEM based on Bernstein–Bézier patches,} Int. J. Num. Meth. Eng. {\bf 82} (2010), 1282--1307. 

\bibitem{GT} D.~Gilbarg, N.~Trudinger, {\em Elliptic partial differential equation of second order}. 
	Fundamental Principles of Mathematical Sciences 224. Springer-Verlag, Berlin, 1983.

\bibitem{Han15} D.~Han, X.~Wang, {\em A second order in time, uniquely solvable, unconditionally stable numerical scheme for Cahn–Hilliard–Navier–Stokes equation,} J. Comp. Phys. {\bf 290} (2015), 139--156.

\bibitem{KM} H.~Kalisch, D.~Mitrovic, {\em On Existence and Admissibility of Singular Solutions for Systems of Conservation Laws}, Int. J. Appl. Comput. Math {\bf 8} 175 (2022). 



\bibitem{Kru} S. N.~Kruzhkov, {\em  
First order quasilinear equations in several independent variables},
Mat.Sb. {\bf 81} (1970), 1309--1351.

\bibitem{Liu03} C.~Liu, and J.~Shen, {\em A phase field model for the mixture of two incompressible fluids and its approximation by a Fourier-spectral method,}  Physica D: Nonlinear Phenomena {\bf 179} (2003), 211--228.


\bibitem{Maga13} F.~Magaletti, F.~Picano, M.~Chinappi, L.~Marino,
C.~ M.~ Casciola, {\em The sharp-interface limit of the Cahn–Hilliard/Navier–Stokes model for binary fluids}, J. Fluid Mech. {\bf 714} (2013), 95--126.

\bibitem{MirAIMS} A.~Miranville, {\em The Cahn–Hilliard equation and some of its variants},  AIMS Mathematics {\bf 2} (2017), 479--544.

\bibitem{Mir_book} A.~Miranville, {\em The Cahn–Hilliard equation: recent advances and applications}, Society for Industrial and Applied Mathematics, 2019. 

\bibitem{Mumf89} D.~Mumford,  J.~Shah, {\em Optimal approximations by piecewise smooth functions and associated variational problems}, Communications on Pure and Applied Mathematics {\bf 42} (1989), 577--685.


\bibitem{Nova22} A.~Novak, N.~Reini\'c, {\em Shock filter as the classifier for image inpainting problem using the Cahn-Hilliard equation}, Comp. \& Math. App. {\bf 123} (2022), 105--114.

\bibitem{Novi08} A.~Novick-Cohen, {\em The Cahn–Hilliard equation}, Handbook of differential equations: evolutionary equations 4 (2008): 201--228.

\bibitem{Osh} S.~Osher, L.I.~Rudin, {\em Feature-oriented image enhancement using shock filters,} SIAM J. Num. Anal. {\bf 27} (1990), 919--940.



\bibitem{Simo20} S.~ Parisotto,  C.-B.~ Sch\"nlieb. MATLAB/Python Codes for the Image Inpainting Problem (3.0.1). Zenodo. (2020).

\bibitem{pazy} A. Pazy, {\em Semigroups of linear operators and applications to partial differential equations}, in App.Math.Sci, Springer New York, 1983.

\bibitem{Pe19} S.~Pei, H.~Yanren, and Y.~Bo, {\em A linearly second-order energy stable scheme for the phase field crystal model}, Applied Numerical Mathematics {\bf 140} (2019), 134--164.


\bibitem{11} S.~Puri, A.~J.~Bray, J.~L.~Lebowitz, {\em Phase-separation kinetics in a model with order parameter-dependent mobility}, Phys. Rev. E {\bf 56} (1997), 758.

\bibitem{Ring18}  T.~Ringholm,  J.~Lazic,  C-B.~Schonlieb, {\em Variational image regularization with Euler's elastica using a discrete gradient scheme}, SIAM J. Imaging Sci. {\bf 11} (2018), 2665--2691.

\bibitem{Rou}T.~Roub\'icek, {\em Nonlinear partial differential equations with applications}, Vol. 153. Springer Science, 2013.

\bibitem{Rudi92}  L.I.~Rudin,  S.~ Osher,  E.~ Fatemi, {\em Nonlinear total variation based noise removal algorithms}, Physica D: Nonlinear Phenomena {\bf 60} (1992), 259--268.


\bibitem{Shen03} J.~Shen,  S.~H. Kang, and T. F. ~Chan, {\em Euler's elastica and curvature--based inpainting}, SIAM Journal on Applied Mathematics {\bf 63} (2003), 564--592.

\bibitem{Shen02} J.~Shen, T. F.~Chan, {\em Mathematical models for local nontexture inpaintings}, SIAM J. App. Math. {\bf 62} (2002), 1019--1043.

\bibitem{Sh21} J.~Shin, and H. G.~ Lee, {\em A linear, high-order, and unconditionally energy stable scheme for the epitaxial thin film growth model without slope selection}, Applied Numerical Mathematics {\bf 163} (2021): 30-42.

\bibitem{Shin17}  J.~Shin, H.~Geun, J.~Lee, {\em Unconditionally stable methods for gradient flow using convex splitting Runge-Kutta scheme}, J. Comput. Phys. {\bf 347} (2017), 367--381.

\bibitem{Sim21}  V. R.Simi, D. Reddy Edla, J. Joseph, {\em An inverse mathematical technique for improving the sharpness of magnetic resonance images}, J. Ambient Intelligence and Humanized Computing {\bf 14} (2023), 2061--2075. 

\bibitem{Tai11} X.-C.~Tai, J.~ Hahn, G.~J. Chung, {\em A fast algorithm for Euler's elastica model using augmented Lagrangian method}, SIAM J. Imaging Sci. {\bf 4} (2011), 313--344.

\bibitem{Than21} D.N.H. ~Thanh, VB Surya Prasath, S. Dvoenko, {\em An adaptive image inpainting method based on Euler's elastica with adaptive parameters estimation and the discrete gradient method}, Signal Processing {\bf 178} (2021), 107797.

\bibitem{Tsai01} A.~Tsai, A.~ Yezzi, A. S.~ Willsky, {\em Curve evolution implementation of the Mumford-Shah functional for image segmentation, denoising, interpolation, and magnification}, IEEE Trans. Image Process. {\bf 10} (2001), 1169--1186.

\bibitem{Vol03} B.P. Vollmayr-Lee, A.D. Rutenberg AD, {\em Fast and accurate coarsening simulation with an unconditionally stable time step}, Physical Review E. 2003 Dec 23;68(6):066703.

\bibitem{Weic03}J.~ Weickert, {\em Coherence-enhancing shock filters}. Pattern Recognition: 25th DAGM Symposium, Magdeburg, Germany, September 10-12, 2003. Proceedings 25. Springer Berlin Heidelberg, 2003.

\bibitem{Yang18} X.~Yang, {\em Numerical approximations for the Cahn–Hilliard phase field model of the binary fluid-surfactant system}, J. Sci. Computing {\bf 74} (2018), 1533--1553.

\bibitem{Zaks05} M.A.~Zaks, A.~ Podolny, A. A.~ Nepomnyashchy and A. A.~ Golovin, {\em Periodic stationary patterns governed by a convective Cahn--Hilliard equation}, SIAM J. App. Math. {\bf 66} (2005), 700--720.

\bibitem{Zhu13} W.~Zhu, X.-C. Tai, T.~Chan, {\em Augmented Lagrangian method for a mean curvature based image denoising model}, Inverse Probl. Imaging {\bf 7} (2013), 1409--1432.


\bibitem{Zhao20} H.~Zhao, B.D.~ Storey, R.D.~ Braatz, M.Z.~ Bazant {\em Learning the physics of pattern formation from images}, Phys. Rev. Lett. {\bf 124} (2020), 060201.











\end{thebibliography}
\end{document}